\setlist[itemize]{noitemsep} 
 \newcommand{\rom}[1]{
  \textup{\lowercase\expandafter{\romannumeral#1}} }
\newcommand{\Rom}[1]{
  \textup{\uppercase\expandafter{\romannumeral#1}} }
\numberwithin{equation}{section}
\theoremstyle{plain}
\newtheorem{theorem}{Theorem}[section]
\newtheorem{lemma}{Lemma}[section]
\newtheorem{proposition}{Proposition}[section]
\newtheorem{remark}{Remark}[section]
\newtheorem{example}{Example}[section]
\newtheorem{assumption}{Assumption}[section]
\newcommand{\reals}{{\mathbb R}}
\newcommand{\bbr}{\reals}
\newcommand{\bbz}{{\mathbb Z}}
\newcommand{\PP}{\mathbb{P}}
\newcommand{\one}{{\bf 1}}
\newcommand{\vep}{\varepsilon}
\newcommand{\ProbSpace}{(\Omega, {\mathcal F}, \PP)}
\newcommand{\eid}{\stackrel{d}{=}}
\begin{document}

\begin{frontmatter}

\title{Extremal clustering under moderate long range dependence and moderately
  heavy tails}
\runtitle{Extremal clustering}
\thankstext{T1}{ This research was partially supported by the ARO grant 
 W911NF-18 -10318 at Cornell  University.}

\begin{aug}
\author{\fnms{Zaoli} \snm{Chen}  \ead[label=e1]{zc288@cornell.edu}} 

 \author{\fnms{Gennady} \snm{Samorodnitsky} 
 \corref{}\ead[label=e2] {gs18@cornell.edu}}        

\runauthor{Zaoli Chen and Gennady Samorodnitsky}

\affiliation{Cornell University}

\address{Department of Mathematics \\
  Cornell University \\
  \printead{e1}}

\address{School of Operations Research and Information Engineering \\
  Cornell University \\
  \printead{e2}}

 \end{aug}

\begin{abstract}
We study clustering of the extremes in a stationary sequence with
subexponential tails in the maximum domain of attraction of the Gumbel
We obtain functional limit theorems in the space of random sup-measures 
and in the space $D(0,\infty)$. The limits
have the Gumbel distribution if the memory is only moderately
long. However, as our results demonstrate rather strikingly, the
``heuristic of a 
single big jump'' could fail  even in a moderately long range
dependence setting. As 
the tails become lighter,  the extremal behavior of a stationary
process may  depend on multiple large values of the driving noise. 
\end{abstract}

\begin{keyword} [class=MSC]
\kwd[Primary ]{60G70, 60F17}
\kwd[; secondary ]{60G57}
\end{keyword}

\begin{keyword}
\kwd{Extreme value theory}
\kwd{long range dependence}
\kwd{random sup-measure}
\kwd{stable regenerative set}
\kwd{subexponential tails}
\kwd{extremal clustering}
\kwd{Gumbel domain of attraction}
\end{keyword}

\end{frontmatter}



\section{Introduction}   \label{sec;intro}
  
This paper is about a very unusual clustering of extreme values that can
occur in certain types of stationary stochastic processes with long
range dependence. It is useful to recall the basic definitions of the
classical extreme value theory. A distribution $H$ on $\bbr$ is in a maximum domain of
attraction if there is a positive sequence $(a_n)$ and a real sequence
$(b_n)$ such that the law of $(M_n^{(0)}-b_n)/a_n$ converges weakly as
$n\to\infty$ to a nondegenerate distribution $G$. Here
$M_n^{(0)}=\max(Y_1,\ldots, Y_n)$ is the largest  value among $n$ i.i.d. random
variables $Y_1,Y_2,\ldots$ with the common distribution $H$. The distribution $G$ is
then, automatically, of the form $G(x)=G_\gamma(Ax+B), \, x\in\bbr$
for some $A>0, B\in\bbr$, and some $\gamma\in\bbr$. The ``standard''
distributions $G_\gamma$ are the Fr\'echet $G_\gamma(x)=\exp\{-x^{-1/\gamma}\}, \,
x\geq 0$ if $\gamma>0$, the Gumbel $G_0(x)=\exp\{ -e^{-x}\}, \, x\in\bbr$,
and the Weibull $G_\gamma(x) = \exp\{ -(-x)^{-1/\gamma}\}, \, x\leq 0$ if
$\gamma<0$. See e.g. \cite{dehaan:ferreira:2006}.

The extreme values of an i.i.d. sequence, obviously, do not
cluster. If, on the other hand, $X_1,X_2,\ldots$ is a stationary
sequence with a common marginal distribution $H$, its extreme values
may exhibit a clustering phenomenon. This is a well studied topic in
the extreme value theory, where a numerical measure of clustering,
{\it the extremal index}, goes back to \cite{leadbetter:1983}. Let
$M_n=\max(X_1,\ldots, X_n)$, $n\geq 1$. As above, we denote by
$M_n^{(0)}$ the largest of the first $n$ {\it i.i.d.} observations
$Y_1,Y_2,\ldots$ with the same marginal distribution $H$. Then the
stationary sequence $X_1,X_2,\ldots$ has extremal index $\theta$ if
for some nondegenerate distribution $G$ we have both
\begin{equation} \label{e:extremal.index}
(M_n^{(0)}-b_n)/a_n \Rightarrow G \ \ \text{and} \ \ (M_n-b_n)/a_n
\Rightarrow G^\theta
\end{equation}
as $n\to\infty$; see e.g. \cite{dehaan:ferreira:2006}. An extremal
index, if it exists, is in the range $0<\theta\leq 1$. Note that the
first statement in \eqref{e:extremal.index} can be rewritten in the
form
$$
(M_{[n\theta]}^{(0)}-b_n)/a_n \Rightarrow G ^\theta\,,
$$
which, in conjunction with the second statement in
\eqref{e:extremal.index}, says that the largest among the first $n$
observations  from the stationary sequence is
``similar'' to the largest of the first $[n\theta]$ observations from
the corresponding i.i.d. sequence. In fact, in most cases a stationary
sequence satisfying \eqref{e:extremal.index} also satisfies
\begin{equation} \label{e:Mn.theta}
  (M_n-b_{[n\theta]})/a_{[n\theta]}\Rightarrow G\,,
\end{equation}
which emphasizes the similarity between $M_n$ and $M_{[n\theta]}^{(0)}$
even more. It is also a part of the folklore in extreme value theory
that the extremal index can be interpreted as the reciprocal of ``the
expected extremal cluster size'', though we will introduce neither the exact
definition of this object  nor the conditions under which this
interpretation is valid; see
e.g. \cite{embrechts:kluppelberg:mikosch:1997}. If we denote such
expected extremal cluster size by $\kappa$, then an alternative
expression of \eqref{e:Mn.theta} is 
\begin{equation} \label{e:Mn.cluster}
  (M_n-b_{[n/\kappa]})/a_{[n/\kappa]}\Rightarrow G\,.
\end{equation}

A special situation occurs when the stationary sequence
$X_1,X_2,\ldots$ exhibits long range dependence with respect to its
extremes (see \cite{samorodnitsky:2016}). In this case
\eqref{e:extremal.index} may hold with $\theta=0$ which, of course,
only says that the centering and scaling of $M_n$ should be changed to
obtain a nondegenerate limit. Intuitively, the extremes cluster so 
much that the extremal clusters become unbounded. and one expects that
\eqref{e:Mn.cluster} should be replaced by
\begin{equation} \label{e:Mn.clusterL}
 (M_n-b_{m_n})/a_{m_n}\Rightarrow G\,,
\end{equation}
where $m_n=[n/\kappa_n]$, and 
now $\kappa_n$ is ``the expected extremal cluster size'' among
the first $n$ observations. This would allow $\kappa_n\to\infty$ and,
therefore, $m_n=o(n)$ as $n\to\infty$. Hence, a change in the order of
magnitude of scaling and/or centering for the partial maximum $M_n$. 

In fact, it has turned out that \eqref{e:Mn.clusterL} holds for
certain stationary infinitely processes with regularly varying
tails. In this case the marginal distributions are in the Fr\'echet
maximum domain of attraction ($\gamma>0$), which involves no centering
($b_n\equiv 0$); see \cite{samorodnitsky:2004a} and
\cite{lacaux:samorodnitsky:2016}. In this setup the sequence $(m_n)$
in \eqref{e:Mn.clusterL} was not obtained via the relation
$m_n=[n/\kappa_n]$ but, rather, turned out to be a direct ingredient 
in the memory in the system. It is important to mention that in these
cases an important distinction has appeared between ``moderate'' long
range dependence and ``extreme'' long range dependence. In the former
case the weak limit in \eqref{e:Mn.clusterL} is, up to shifting and
scaling, the standard Fr\'echet distribution, while in the latter case
the limit is not one of the classical extreme value distributions; see
\cite{samorodnitsky:wang:2019}. In the former case an extreme of the
process  due to a single large value of the underlying noise,
consistent with the ``heuristic of a single big jump'' for extreme
events and large deviations of heavy tailed systems (e.g. 
\cite{rhee:blanchet:zwart:2019}). On the other hand, in the latter
case this heuristic fails.

Our goal in this paper is to understand how the extremes of a
long memory stationary process cluster when the marginal tails 
are still heavy (so that the ``heuristic of a
single big jump'' is still often the first guidance one has), but less
heavy than the regularly varying tails considered earlier. The natural
class of such marginal distributions is the class of {\it
  subexponential distributions}. Recall that a  distribution $H$ is
subexponential if
\begin{equation} \label{e:subexp.def}
\lim_{x\to \infty}  \frac{\overline{H \ast H} (x)}{\overline{H}(x)}=2\,,
\end{equation}
where $\overline{H}(x)=1-H(x)$ (\cite{chistyakov:1964}). Distributions
with a regularly varying right tail are, of course, subexponential,
but we are interested in the subexponential distributions whose tails
are lighter than any regularly varying tail. Specifically, we are
interested in the subexponential distributions in the maximum domain
of attraction of the Gumbel distribution $G_0$. We will give the exact
assumptions on the marginal tails in the sequel.

A surprising conclusion of our results that, while
\eqref{e:Mn.clusterL} does continue to hold in the class of long
memory stationary proceses with 
certain subexponential
distributions in the  maximum domain 
of attraction of the Gumbel distribution $G_0$ as the marginal
distributions, \eqref{e:Mn.clusterL} breaks down for such
distributions once their tails become light enough. This may happen
even when the memory is only ``moderate long memory'' (to be defined
precisely in the sequel.) The moderate long memory case is the only
one we consider in this paper. It turns out that when the tails become
light enough, the centering in the left hand side of ,
\eqref{e:Mn.clusterL} acquires another term, of a smaller order than
$b_{m_n}$ (but of a larger order than $a_{m_n}$.) This term arises
because the ``single big jump'' heuristic breaks down once again. That
is, that heuristic may break down not only when the memory is too
long, but also when the tails are too light (while remaining
subexponential, hence heavy!)

The paper is organized as follows. In Section \ref{sec:prelim} we
review some essential facts on subexponential distributions in the
Gumbel maximum domain of attraction, random closed sets,
null-recurrent Markov chains underlying the infinitely divisible
dynamics in our model  and random sup-measures, and introduce the
limiting random sup-measure later appearing in the main 
result. In Section \ref{sec;sidp}, we introduce the stationary
infinitely divisible processes we are considering and list the
assumptions we are imposing.  The main results, the extremal limit
theorems in the space of random sup-measures and in the space of
c\`{a}dl\`{a}g functions are stated and proved in Section \ref{sec;elt}. This
sections also contains two natural examples. 
The appendices \ref{app;+rwrv} and  \ref{app;supp} contain several
arguments and verifications needed in the earlier parts of the paper. 

We will use the following standard notation throughout the paper. 
Let $\{a_n\}_{n \in \mathbb{N}}$ and $\{b_n\}_{n \in \mathbb{N}}$ be
two positive sequences. We describe the asymptotic relation between
them by writing: 
\begin{enumerate} [label=(\alph*)]
    \item  $a_n \sim b_n$ if 
      $\lim_{n \to \infty}a_n / b_n =1$,
      
      \item $a_n \gg b_n$ if $\lim_{n \to \infty} a_n  / b_n = \infty$, or equivalently
      $b_n \ll a_n$, 

     \item  $a_n \lesssim b_n $ if there exists $C>0$ such that $a_n
       \leqslant C b_n$  for large enough $n$, and analogously with
       $a_n \gtrsim b_n $, 

    \item $a_n \asymp b_n$ if both $a_n \lesssim b_n $ and $a_n
      \gtrsim b_n $.
    \end{enumerate}

    If $\{A_n\}_{n \in \mathbb{N}}$ and $\{B_n\}_{n \in \mathbb{N}}$
    are two sequences of positive random variables, we will write
    
   \begin{enumerate} [label=(\alph*)]
    \item $A_n = o_P(B_n)$ if $A_n / B_n \to 0$ in probability, 
    \item $A_n\lesssim_P B_n$ if  $(A_n/B_n)$ is tight, and
      analogously with $A_n \gtrsim_P B_n$. 
\end{enumerate}


\section{Preliminaries} \label{sec:prelim}

This section is of the background nature. It collects a number of
mostly well-known notions and results needed in this paper. 

\subsection{Subexponential distributions in the Gumbel maximum domain
  of  attraction} \label{subsec:gumbel}

Most of the material quoted in this section is in \cite{resnick:1987};
see also \cite{goldie:resnick:1988}. Subexonentiality requires the
distribution to have a support which is unbounded on the right, so we
only consider such distributions. 
A  distribution $H$ is in the maximum domain
  of  attraction of the Gumbel distribution if and only if $G= ( 1 /
  (1-H))^\leftarrow $ is $\Pi$-varying, and for a non-decreasing
  function $J$ the generalized inverse of $J$ is defined as  
\begin{equation} \label{eq;inverse1}
    J^{\leftarrow} (y) = \inf \{ s : J(s) \geqslant y  \}\,.
  \end{equation}
Furthermore, a non-negative, non-decreasing function $V$ is said to be
$\Pi$-varying if there exist functions $a(t)>0, b(t) \in \mathbb{R}$
such that for $x>0$ 
\begin{equation} \label{e:pi}
    \lim_{t \to \infty} \frac{V(tx) - b(t)}{a(t)} = \log x . 
\end{equation} 
Alternatively, $H$ is in the maximum domain
  of  attraction of the Gumbel distribution if and only if there exist
  $x_0 \in \mathbb{R}$ and $c(x)\to c>0$ as $x\to\infty$ such
  that for $x_0< x <  \infty$ 
\begin{equation} \label{eq;VM1}
    \overline{H}(x) = c(x) \exp \left \{
    - \int_{x_0} ^ x \frac{1}{h(u)} du 
    \right \}
\end{equation}
where $h$ (the
so-called auxiliary function) is an absolutely continuous
positive function  on $(x_0,\infty)$ with density $h^\prime$ satisfying
$\lim_{u \to \infty} h^\prime (u) =0$. The function $h$ must satisfy
$h(x)=o(x)$ as $x\to\infty$, and subexponentiality of $H$
requires also $\lim_{u \to \infty} h(u) =\infty$. For a distribution
$H$ satisfying \eqref{eq;VM1}, the centering and scaling required for
the convergence $(M_n^{(0)}-b_n)/a_n\to G_0$ can be chosen as 
$$
        b_n = \left(  \frac{1}{1-H} \right ) ^ \leftarrow  (n) , \ 
         a_n = h ( b_n )\,.
         $$
We will often use the following fact: if one
replaces the function $c(\cdot)$ in \eqref{eq;VM1} by an asymptotically
equivalent function, and denotes the new normalizing sequences by
$(\tilde a_n)$ and $(\tilde b_n)$, then
\begin{equation} \label{e:small.diff}
  \lim_{n\to\infty} \frac{b_n-\tilde b_n}{a_n}=0, \ 
  \lim_{n \to \infty} \frac{ \tilde a_n } {a_n } = 1 \, .
  \end{equation}

  \subsection{Random closed sets} \label{subsec:RCS}
  
We use the notation  $\mathcal{G}, \mathcal{F}$ and $\mathcal{K}$  for
the  families of open, closed and compact sets of $[0,1]$ or
$[0,\infty)$ (depending on the context), respectively. Details on most of the
material in this section can be found in \cite{molchanov:2017}.

The Fell topology $\mathcal{B}(\mathcal{F})$ on 
$\mathcal{F}$ is the topology generated by the subbasis
\begin{align*}
 &\mathcal{F}_G =   \left\{ F \in \mathcal{F} : F \cap G \neq \emptyset  \right \}, \quad G \in \mathcal{G},  \\
 &\mathcal{F}^K =  \left\{ F \in \mathcal{F} : F \cap K  =  \emptyset  \right \} , \quad K \in \mathcal{K}. 
\end{align*}
The Fell topology is metrizable and compact. 
A random closed set is a measurable mapping from a probability space
to $\mathcal{B}(\mathcal{F})$.  

For $\beta\in (0,1)$, let $\{ L_\beta (t) \}_{t \geqslant 0}$ be the
standard $\beta$-stable subordinator, i.e. the increasing L\'{e}vy
process with  the Laplace transform $\mathbb{E}e^{-\theta L_\beta(t)} = e
^{- t \theta ^  \beta}, \theta \geqslant 0$. We define the $\beta$-stable regenerative set $Z$ to be the closure of the range of a $\beta$-subordinator, i.e.,
\begin{equation} \label{eq;betaSRS}
    Z =\overline{\{ L_\beta(t): t \geqslant 0 \}}.
\end{equation}
It is a random closed subset of $[0,\infty)$. Much of the discussion
in this paper revolves around a sequence of i.i.d. random closed
subsets of $[0,1]$ defined as follows. 

Let $\{V_j\}_{j\geqslant 1}$ be a family of i.i.d. random variables on
$[0,1]$ with
\begin{equation} \label{eq;distV}
    \mathbb{P}( V_1 \leqslant x ) = x ^ {1-\beta}, \quad x \in
    [0,1]\,,
\end{equation}
independent of an i.i.d. sequence $\{Z_j \}_{j \geqslant 1}$ of
$\beta$-stable regenerative sets. We define 
\begin{equation}  \label{eq;tctSRS}
    \overline{R_j} = ( V_j + Z_j ) \cap [0,1].
\end{equation}
That is, each $ \overline{R_j}$ is the restriction of a shifted stable
regenerative set to  $[0,1]$. It is, clearly, non-empty.

\subsection{Null-recurrent Markov chains} \label{subsec:nullrec}

We describe now some ergodic theoretic notions associated with certain
null recurrent Markov chains.  Our main reference here is
\cite{aaronson:1997}. 
Let $\{ Y_t \}_{t\in \mathbb{Z}}$ be an irreducible, aperiodic, and
null recurrent Markov chain on  $\mathbb{Z}$, and denote by $(\pi_i)_{i \in
  \mathbb{Z}}$ its unique  invariant measure satisfying $\pi_0 =1$. 
If $(E, \mathcal{E})$ is the 
``path space'' $(\mathbb{Z} ^ \mathbb{Z} , \mathcal{B} (\mathbb{Z} ^
\mathbb{Z}) )$, we can define an infinite $\sigma$-finite measure on
$(E,\mathcal{E})$ by 
\begin{equation} \label{eq;mu}
    \mu (\cdot) : = \sum_{i\in \mathbb{Z}} \pi_i P_i (\cdot)\,,
  \end{equation}
where $P_i$  is the probability law of $\{Y_t\}_{t\in \mathbb{Z}}$ on $(E,\mathcal{E})$ 
given $Y_0=i$. The  left shift operator on $E$ by $\theta$ defined by 
\begin{equation}  \label{eq;theta}
    \theta: (\ldots,y_0, y_1,y_2,\ldots) \mapsto 
(\ldots, y_1, y_2, y_3,\ldots) 
\end{equation}
is a measure preserving, conservative and ergodic operator on 
 $(E,\mathcal{E},\mu)$. See \cite{harris:robbins:1953}.  
For  $n \in \mathbb{Z}$ let 
\begin{equation} \label{eq;An}
    A_n : = \{  y \in E : y_n = 0   \}.
\end{equation} 
The {\it wandering rate sequence} $\{w_n\}_{n\in \mathbb{N}}$ is then
defined  as
\begin{equation}  \label{eq;wdr}
    w_n : = \mu \left ( \bigcup_{k=0} ^ {n} A_k \right ), \quad n \in \mathbb{N}.
\end{equation}
 
Define the first visit time to state $0$ by 
\begin{equation} \label{eq;varphi}
\varphi (y) : = \inf \{ t\geqslant 1 : y_t = 0 \}, y \in E. 
\end{equation}
The Markov chains we consider satisfy the following assumption. 
\begin{assumption}  \label{ass;YRV0}
There exists $\beta\in (0,1)$ and a slowly varying function $L$ such that 
\begin{align}  
    &\overline{F} (n) : = P_0 [\varphi > n  ] = n ^{-\beta}
      L(n) \in \text{RV}_{-\beta}\,.
    \label{eq;varphi1}
\end{align}
Furthermore,
\begin{align}
 & \sup_{n \geqslant 0} \frac{n \mathbb{P}_0 (\varphi = n)
   }{\overline{F}(n)} < \infty\,. 
  \label{eq;varphi2} 
\end{align}
\end{assumption}
 
\begin{remark} \label{rk:return}
{\rm 
  Under the Assumption \ref{ass;YRV0}, as $n\to\infty$,
 \begin{align}
w_n & \sim  \sum_{k=1}^n \mu (  \varphi =k   ) \notag \\
    & =  \sum_{k=1} ^ n \overline{F} (k-1) 
    \sim \frac{ n^{1-\beta} L(n)}{1-\beta} \in \text{RV}_{1-\beta}\,.\label{e:wn}
\end{align}
See \cite{resnick:samorodnitsky:xue:2000} Lemma 3.3.
 }
\end{remark}

The times a sequence $y\in E$ visits state 0 under certain conditional
versions of the measure $\mu$ are of crucial importance for
us. Specifically, for  each $n\in \mathbb{N}$, let
\begin{equation}  \label{eq;mun}
    \mu_n (B) : = \frac{ \mu\left(B \cap  \bigcup_{k=0}^n A_k \right )
    } { w_n }, \  B \in \mathcal{E}. 
\end{equation}
Let $\{ Y^{(j,n)}  \}_{j\in \mathbb{N}}$ be a family of  i.i.d. random
elements in $E $ with law $\mu_n$. For each $j$ we set 
\begin{equation}  \label{eq;Ijn}
    I_{j,n} : = \left \{ 0 \leqslant t \leqslant n : Y^{(j,n)}_t = 0
    \right  \}\,.
\end{equation}
We further define
\begin{align} 
    & \widehat{I}_{1,n} : = I_{1,n}  \label{eq;hatI1} , \\
    & \widehat{I}_{j,n} : = I_{j,n} \cap \bigcap_{i=1}^{j-1}
      I_{i,n}^c,  \quad j \geqslant 2  \label{eq;hatIj}.
\end{align}

The facts mentioned below are in \cite{samorodnitsky:wang:2019}. First, by
Theorem 5.4 {\it ibid.}, 
 \begin{equation} \label{eq;mtg3.1}
     \frac{1}{n}I_{j,n} \Rightarrow \overline{R_j}, \ j=1,2,\ldots
   \end{equation}
   weakly in the space of random closed subsets of $[0,1]$, 
   where $ \overline{R_j}$ is defined in (\ref{eq;tctSRS}). In
   particular,  
    \begin{equation}  \label{eq;mtg3.2}
        \lim_{n \to \infty} \mathbb{P}\left(
        I_{j,n} \cap n G \neq \emptyset
        \right)  = \mathbb{P}\left(
        \overline{R_j} \cap  G \neq \emptyset
        \right)  > 0  \; \text{for any } G \in \mathcal{G}([0,1]).
    \end{equation} 
 If, in addition, $0<\beta<1/2$, then 
for any two distinct $j_1, j_2 \in \mathbb{N}$
   \begin{equation}  \label{eq;mtg3.3}
     \lim_{n\to \infty}  \mathbb{P} \left(
     I_{j_1,n} \cap I_{j_2,n} \not= \emptyset
     \right ) = 0 . 
   \end{equation}
 Therefore,  for any $m \in \mathbb{N}$, 
    \begin{equation} \label{eq;mtg3.4}
      \lim_{n\to \infty} \mathbb{P} \left(
      I_{j,n}= \widehat{I}_{j,n} , \, j=1,\ldots,m.
      \right ) = 1 .
    \end{equation} 

In the sequel we will need estimates of how quickly the intersection
probability in \eqref{eq;mtg3.3} and certain related probabilities
converge to zero. For an open interval $T \subset [0,1]$ we define
\begin{align}
    &    p_{n,T} : = \mathbb{P}\left( I_{1,n} \cap I_{2,n}  \cap nT \neq \emptyset \right)  \label{eq;anneal} ,\\
    &  \overline{p }_{n,T}:  
    = \mathbb{P}\left(  I_{1,n}  \cap I_{2,n}  \cap nT \neq \emptyset
      \big| Y^{(1,n)}  \right )\,, 
    \label{eq;quench}
\end{align}
with the latter probability being random. Clearly,  $ p_{n,T} =
\mathbb{E} \overline{p}_{n,T} $. The following theorem may be of
independent interest. It is proved in Appendix \ref{app;+rwrv}. 
 
\begin{theorem} \label{thm;istP}
Under Assumption \ref{ass;YRV0} with $0<\beta<1/2$, for any open
interval $T$,  
\begin{enumerate}  [label=(\roman*)]
    \item  
    \begin{equation} \label{eq;anl1}
        p_{n,T} \asymp  \frac{  n^{\beta}  }{w_n L(n)}\,. 
    \end{equation}
    \item 
    For any $C>0$, there exists  $c>0$ such that for every $n\geq 1$ 
    \begin{equation} \label{eq;qch1}
        \mathbb{P}\left( 
        \overline{p}_{n,T} \geqslant \frac{ c n^{\beta} \log n }{w_n L(n)} 
        \right)  \leqslant n^{-C}\,.
    \end{equation}
    \item 
    For any $\gamma > (1-2\beta)^{-1}$ and $\epsilon>0$, there exists
    $c>0$ such that 
    \begin{equation} \label{eq;qch2.1}
    \liminf_{n\to \infty}   \mathbb{P} \left( \overline{p}_{n,T} \geqslant  \frac{c n^\beta  }{w_n  L(n)}  \cdot  \frac{ L\left( (\log n)^\gamma  \right )}{(\log n)^{\gamma \beta} }
       \, \Big \vert \,  I_{1,n} \cap \, nT  \neq \emptyset
       \right) \geqslant 1 - \epsilon . 
    \end{equation}
 \end{enumerate}

\end{theorem}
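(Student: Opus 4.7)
All three bounds rest on moment computations combined with Paley--Zygmund, exploiting the Green's function estimate $u(d):=\mathbb{P}_0(Y_d=0)\asymp d^{\beta-1}/L(d)$ (the Garsia--Lamperti / Erickson renewal theorem under Assumption~\ref{ass;YRV0}) together with the summability $\sum_d u(d)^2<\infty$ available precisely because $\beta<1/2$.

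For (i), I would analyze the count $N:=|I_{1,n}\cap I_{2,n}\cap nT|$. Since $\mathbb{P}(k\in I_{j,n})=\mu(A_k)/w_n=1/w_n$ for every $k\in\{0,\dots,n\}$ by shift invariance of $\mu$ and $\pi_0=1$, independence of the two chains together with Markov's inequality and \eqref{e:wn} gives the upper bound $p_{n,T}\le\mathbb{E}N\lesssim n/w_n^2\asymp n^\beta/(w_nL(n))$. For the matching lower bound, the identity $\mathbb{P}(k_1,k_2\in I_{j,n})=u(|k_2-k_1|)/w_n$ produces $\mathbb{E}N^2\lesssim(n/w_n^2)\sum_d u(d)^2\lesssim n/w_n^2$, so Paley--Zygmund yields $p_{n,T}\ge (\mathbb{E}N)^2/\mathbb{E}N^2\gtrsim n/w_n^2$.

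For (ii), set $V:=I_{1,n}\cap nT$. The pathwise bound $\overline{p}_{n,T}\le\mathbb{E}[N\mid Y^{(1,n)}]=|V|/w_n$ reduces the claim to exponential concentration of $|V|$. Iterating the renewal identity $\mathbb{P}(k_1<\cdots<k_k\in I_{1,n})=u(k_2-k_1)\cdots u(k_k-k_{k-1})/w_n$ yields the factorial moment bound $\mathbb{E}|V|^k\le k!\bigl(C\mathbb{E}|V|\bigr)^k$, whence $\mathbb{E}e^{\theta|V|}\le(1-C\theta\mathbb{E}|V|)^{-1}$ and Chernoff gives $\mathbb{P}(|V|>t\mathbb{E}|V|)\le et\,e^{-t/C}$; choosing $t=c(C,C_0)\log n$ large enough finishes (ii) since $\mathbb{E}|V|\asymp n^\beta/L(n)$.

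Part (iii) is the heart of the matter. I would apply Paley--Zygmund to $X:=|I_{2,n}\cap V|$ conditionally on $Y^{(1,n)}$:
\[
\overline{p}_{n,T}\ge\frac{|V|^2}{w_n S(V)},\qquad S(V):=\sum_{k,k'\in V}u(|k-k'|),
\]
and split $S(V)$ at the scale $h:=(\log n)^\gamma$. The near-pair sum admits the deterministic bound $\sum_{|k-k'|\le h}u(|k-k'|)\le 2|V|U(h)$. The far-pair sum has expectation
\[
\mathbb{E}\Bigl[\sum_{|k-k'|>h}u(|k-k'|)\Bigr]\asymp\frac{n}{w_n}\sum_{d>h}u(d)^2\asymp\mathbb{E}|V|\cdot\frac{h^{2\beta-1}}{L(h)^2},
\]
which is $o(\mathbb{E}|V|U(h))$ since $h^{\beta-1}/L(h)\to 0$; Markov's inequality, together with \eqref{eq;mtg3.2} to handle conditioning on the asymptotically positive event $\{V\neq\emptyset\}$, makes the far-pair piece at most $\mathbb{E}|V|U(h)/2$ with conditional probability at least $1-\epsilon/2$. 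Meanwhile, \eqref{eq;mtg3.1} and the Mittag--Leffler limit for the local time of the $\beta$-stable regenerative set imply that $|V|/\mathbb{E}|V|$ converges in distribution on $\{V\neq\emptyset\}$ to a strictly positive random variable, so $|V|\ge c_\epsilon\mathbb{E}|V|$ with conditional probability at least $1-\epsilon/2$. On the intersection of these two events, $S(V)\le 3|V|U(h)$, and hence
\[
\overline{p}_{n,T}\ge\frac{|V|}{3w_n U(h)}\ge\frac{c_\epsilon\mathbb{E}|V|}{3w_nU(h)}\asymp\frac{n^\beta L(h)}{w_nL(n)h^\beta},
\]
which is \eqref{eq;qch2.1}. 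The main obstacle will be the quantitative high-probability control of the far-pair sum; the hypothesis $\gamma>(1-2\beta)^{-1}$ is precisely what is needed to absorb, with the required high probability, the slowly varying error inherited from $\sum_{d>h}u(d)^2\asymp h^{2\beta-1}/L(h)^2$ into the dominant near-pair contribution $U(h)\asymp h^\beta/L(h)$.
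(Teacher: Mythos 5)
Your three proofs are correct in outline, and they take a genuinely different route from the paper's. For (i) the paper uses a last-exit decomposition for the pair $(Y^{(1)},Y^{(2)})$ via the (terminating, since $\beta<1/2$) renewal process of simultaneous visits to $0$, arriving directly at $p_{n,T}\asymp n/w_n^2$, whereas you obtain the same two-sided bound by Markov and Paley--Zygmund for the intersection count $N$. For (ii) the paper embeds $I_{1,n}$ as the range of a random walk and invokes the Pruitt--Taylor sojourn-time inequality \eqref{eq;sjt1} (giving \eqref{eq;obsB1.3}), while you derive sub-exponential concentration of $|V|$ directly from the Markov renewal factorization of $\mathbb P(k_1,\dots,k_m\in I_{1,n})$ and a Chernoff bound — both are exponential-concentration arguments, but yours is self-contained and avoids the external lemma. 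For (iii) the contrast is sharpest: the paper constructs the $(\log n)^\gamma$-sparse subset $I_{1,n,\gamma}$, applies a last-entrance decomposition over that set, and controls the resulting geometric sum via Lemma~\ref{thm;range2} and Lemma~\ref{lem;sparse1}; you instead apply conditional Paley--Zygmund to $|I_{2,n}\cap V|$ and split the Green's-function kernel $S(V)=\sum_{k,k'\in V}u(|k-k'|)$ at the same scale. Your route is cleaner and, as a bonus, appears to yield \eqref{eq;qch2.1} for \emph{every} $\gamma>0$: the far-pair expectation ratio is $\mathbb E[S_{\rm far}]/\bigl(\mathbb E|V|\,U(h)\bigr)\asymp h^{\beta-1}/L(h)\to0$ for any $h\to\infty$, so the Markov step already succeeds without $\gamma>(1-2\beta)^{-1}$; that constraint in the paper is an artifact of the $\log n$ loss appearing in \eqref{eq;qch4.1} and \eqref{eq;qch2.5}, not of your far-pair control, so your final diagnostic sentence attributes it to the wrong place. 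Two things you would need to nail down to make the proof rigorous: (a) the claim that $|V|/\mathbb E|V|$ has a strictly positive distributional limit given $\{V\neq\emptyset\}$ — this is the local-time refinement of \eqref{eq;mtg3.1} and is plausible but requires an argument (compare Lemma~\ref{lem;sparse1}, which supplies the analogous lower-tail estimate the paper actually needs); and (b) the passage from $\mathbb E\binom{|V|}{m}\le(C\,\mathbb E|V|)^m$ to the mgf bound needs a line of justification (e.g.\ via $\mathbb P(|V|\ge t)\le\inf_m\mathbb E\binom{|V|}{m}/\binom{t}{m}$ rather than raw moments), but this is routine.
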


It follows immediately from part (iii) of the theorem that
  \begin{equation} \label{eq;qch2.2}
        \overline{p}_{n,[0,1]} \gtrsim_P
        \frac{n^\beta  }{w_n  L(n)}  \cdot  \frac{ L\left( (\log
            n)^\gamma  \right )}{(\log  n)^{\gamma \beta} }\,.
     \end{equation}       
for any $\gamma > (1-2\beta)^{-1}$.

\subsection{Random Sup-Measures} \label{sec:rsm}

We continue to use the notation of Subsection \ref{subsec:RCS}. Our
main reference is \cite{obrien:torfs:vervaat:1990}; note that our
sup-measures take values in $\overline{\mathbb{R}}=[-\infty,
\infty]$. 

A  sup-measure is a mapping $m:\mathcal{G} \to \overline{\mathbb{R}}$ such that $m(\emptyset) =-\infty$ and 
$ m (  \cup_\alpha G_\alpha ) = \vee_\alpha m (G_\alpha) $ for an
arbitrary collection $( G_\alpha )$ of open sets. 
The sup-derivative $d^\vee m$ of $m$ is defined by
$$
        d^ \vee m (t) = \bigwedge_{t \in G} m (G) ,
$$
it is an upper semicontinuous $\overline{\mathbb{R}}$-valued 
function of $t$. Given an $\overline{\mathbb{R}}$-valued 
function $f$,  the sup-integral of $f$ defined by
$$
        i^ \vee f (G) = \bigvee_{t \in G } f (t), \quad  G \in \mathcal{G};
$$
it is automatically a sup-measure. It is always true that  $m = i^
\vee d ^ \vee m$, and we can extend the 
 domain of a sup-measure to all Borel sets via  
    \begin{equation} \label{eq;SMext}
        m(B) = \bigvee_{t \in B}  d^\vee m (t), \quad B \ \ \text{Borel.}
      \end{equation}

   On the collection $\text{SM}$ of all sup-measures one defines
the sup-vague topology, in which a sequence of sup-measures
$\{m_n\}_{n \geqslant 1}$ converges to a sup-measure $m$
if and only if   
    \begin{align*}
        & \limsup_{n \to \infty}  m_n(K) \leqslant m(K), \quad 
        \text{for each }K \in \mathcal{K}, \\
        & \liminf_{n \to \infty}  m_n(G) \geqslant m(G), \quad 
        \text{for each }G \in \mathcal{G}.
    \end{align*}
The space $\text{SM}$ equipped with sup-vague topology is compact and metrizable.

A random sup-measure $M$ is a measurable mapping from a probability
space to SM.  For a random sup-measure $M$, let $\mathscr{I}(M)$ be
the collection of continuity intervals of $M$, defined by 
$$
\mathscr{I}(M) = \{ I \ \text{an open interval}:\ M(I) = M(\text{clos } I ) \text{ a.s.} \}.
$$
If $\{M_n\}_{n \geqslant 1}$ and $M$ are random sup-measures, then
$M_n \Rightarrow M$ if and only if 
\begin{equation} \label{eq;rsmWC1}
    (M_n (I_1),\ldots,M_n (I_m) ) \Rightarrow 
    (M (I_1),\ldots,M (I_m) )
\end{equation}
for arbitrary disjoint intervals $I_1,\ldots,I_m \in \mathscr{I}(M)$.

We now define a family of random sup-measures that will arise
naturally in the sequel. Let $\beta \in (0,1/2)$ and consider a
Poisson point process on  
$\mathbb{R} \times  \mathbb{R}_+ \times \mathcal{F}(\mathbb{R}_+) $ with mean measure 
$$
e^{-u} d u  (1-\beta)v^{-\beta} d v \, d P_\beta\,,
$$
where $P_\beta$ is the law of the $\beta$-stable regenerative set in
\eqref{eq;betaSRS}.  Let $(U_j, V_j^*, Z_j)_{j \in \mathbb{N} }$ be a
measurable enumeration of points of this Poisson point process, and
denote 
\begin{equation}  \label{eq;sftSRS}
    R_j = V_j^* + Z_j, \quad j \in \mathbb{N}.
\end{equation}
Since $\beta\in (0,1/2)$, we have 
\begin{equation} \label{eq;SRScap}
    \mathbb{P}(R_1 \cap R_2 = \emptyset) =1\,;
\end{equation}
see Lemma 3.1 in \cite{samorodnitsky:wang:2019}. It follows
immediately that, on an event of probability 1, the function
$\eta:\mathbb{R}_+ \to \overline{\mathbb{R}}$ defined by 
$$
    \eta (t)  = \bigvee_{j=1} ^ \infty U_j 1_{\{   t \in R_j  \} } 
$$
is an upper semicontinuous function. Hence, it is the sup-derivative
of the random sup-measure
\begin{equation} \label{eq;limRSM1}
    \mathcal{M}(B) = \bigvee_{j=1}^\infty U_j 1_{  \{B\cap R_j\not=\emptyset \} }\,.
  \end{equation}
This measure is stationary, i.e. 
    \begin{equation}\label{eq;limRSM1.1}
        \mathcal{M}( r + \cdot ) \stackrel{d}{=} \mathcal{M}(\cdot)
    \end{equation}
    for any $ r \geqslant 0$; see Proposition 4.3 in
    \cite{lacaux:samorodnitsky:2016}.  Moreover, we claim that $M$ is
    self-affine, i.e. 
    \begin{equation}\label{eq;limRSM1.2}
        \mathcal{M}(a \cdot ) \stackrel{d}{=} \mathcal{M}(\cdot)
        +  (1 - \beta) \log a 
    \end{equation}
   for any $a>0$.  Indeed, note that  
\begin{align*}
   \mathcal{M}(a B ) 
    & =  \bigvee_{j=1}^ \infty U_j 1_{ \{ B\cap (a^{-1}V_j^* +  a^{-1} Z_j) \not=\emptyset  \} } \\
    & \stackrel{d}{=} \bigvee_{j=1}^ \infty U_j 1_{ \{ B\cap (a^{-1}V_j^* +   Z_j) \not=\emptyset  \} }  \\
    & \stackrel{d}{=} 
    \bigvee_{j=1}^ \infty ( U_j + (1-\beta)\log a )   1_{ \{ B\cap (V_j^* +   Z_j) \not=\emptyset  \} } =   \mathcal{M}(B )  +  (1 - \beta) \log a\,;
\end{align*}
see e.g. Proposition 4.1(b) in \cite{samorodnitsky:2016} for the
first distributional equality, while the second one holds because 
both the points $(U_j, a^{-1}V_j^*, Z_j)_{j \in \mathbb{N} }$ and the points
$(U_j + (1-\beta)\log a, V_j^*, Z_j)_{j \in \mathbb{N} }$ form a  
Poisson point process with mean measure 
$$
a^{1-\beta}e^{-u} d u  (1-\beta)v^{-\beta} d v \, d P_\beta\,. 
$$

\medskip

Suppose that $\{X_t\}_{t \in \mathbb{Z} }$ is a stationary process. It
induces naturally 
a sequence of random sup-measures by setting for $n\in
\mathbb{N}$ 
\begin{equation} \label{e:sup.m.n}
M_n (B) = \max_{t \in n B} X_t , \quad B \in \mathcal{B}(\mathbb{R}_+)\,.
\end{equation} 
If $\mathcal{M}$ is the random sup measure in \eqref{eq;limRSM1}, then
the  weak convergence 
$$
\frac{M_n(\cdot) - b_n}{a_n} \Rightarrow \mathcal{M}(\cdot)
$$ 
 in the space of random sup-measures on $[0,1]$ for some $(a_n,b_n)$
 guarantees also this weak convergence in the space of random 
 sup-measures on $\bbr_+$. Furthermore, every open interval is a
 continuity interval for  $\mathcal{M}$, since stable regenerative
 sets do not hit fixed points. By (\ref{eq;rsmWC1})  
\begin{equation}  \label{eq;rsmWC2}
    \left( \frac{M_n(I_1) - b_n}{a_n}, \ldots, \frac{M_n(I_m) - b_n}{a_n}
\right )\Rightarrow ( \mathcal{M}(I_1), \ldots, 
 \mathcal{M}(I_m) )
\end{equation}
for arbitrary disjoint open intervals $I_1,\ldots, I_m$ in $[0,1]$ 
is necessary and sufficient for weak  convergence to $\mathcal{M}$.

The restriction of the sup measure $\mathcal{M}$ to subsets of $[0,1]$
has  representation somewhat more transparent than
\eqref{eq;limRSM1}.  Let $\{V_j\}_{j\geqslant 1}$ be a family of
i.i.d. random variables on $[0,1]$ with  the law \eqref{eq;distV}.
Let $\{Z_j \}_{j \geqslant 1}$ be a family of i.i.d. $\beta$-stable
regenerative sets in (\ref{eq;betaSRS}).   Finally, 
let $\{\Gamma_j\}_{j\geqslant 1}$ be the sequence of arrival times of
a unit rate Poisson processes on $(0,\infty)$. We assume that all
three sequences are independent. Then the points $(-\log \Gamma_j,
V_j, Z_j)_{j \in 
  \mathbb{N} }$ form a Poisson point process on $\mathbb{R} \times
[0,1] \times \mathcal{F}(\mathbb{R}_+) $ whose mean measure is the
mean measure of restriction of the Poisson point process $(U_j, V_j^*,
Z_j)_{j \in   \mathbb{N} }$ to $\mathbb{R} \times
[0,1] \times \mathcal{F}(\mathbb{R}_+) $. Therefore, if we define
i.i.d. random nonempty compact sets by \eqref{eq;tctSRS}, 
then the following representation in law holds: 
\begin{equation} \label{eq;limRSM2}
    \mathcal{M}(B) = \bigvee_{t\in B} -\log \Gamma_j 1_{\{  B\cap
      \overline{R_j} \neq \emptyset \} }, \ B \in
    \mathcal{B}([0,1])\,. 
\end{equation}


\section{A family of stationary subexponential infinitely divisible
  processes} 
\label{sec;sidp}

We now define a family of stationary infinitely divisible processes
for whom we will establish extremal limit theorems. Our processes will
be of the form 
\begin{equation} \label{eq;seqX}
    X_n = \int_E  f\circ\theta^n(x)\, M(dx), \ n\in \mathbb{Z}\,,
 \end{equation}
where $\theta$ is the left shift operator on $E=\bbz^\bbz$ in
\eqref{eq;theta} and $M$ is an infinitely divisible  random measure 
on $(E, \mathcal{E})$ with a constant local characteristic triple
$(\sigma^2, \nu, b)$ and control measure $\mu$ in \eqref{eq;mu},
associated with an invariant measure of an irreducible, aperiodic, and
null recurrent Markov chain on  $\mathbb{Z}$; see Chapter 3 in
\cite{samorodnitsky:2016} for details in infinitely divisible random
measures and integrals with respect to such measures. The function $f$
must satisfy certain integrability conditions; if it does, the process
$\{X_n\}_{n\in \mathbb{Z}}$ is automatically stationary, because the
left shift $\theta$ preserves 
the control measure $\mu$. In the sequel we will assume, for
simplicity, that $f$ is the indicator function
\begin{equation} \label{e:f.ind}
f(x) = \one(x_0=0) \ \ \text{for} \ \  x= (\ldots,x_0, x_1,x_2,\ldots)\,,
\end{equation} 
but the results of this paper will undoubtedly hold for a more general
class of functions $f$.  The indicator function $f$ in
\eqref{e:f.ind} automatically satisfies the integrability conditions
and, in this case, each $X_n$ is an infinitely divisible random
variable with a characteristic triple $(\sigma^2, \nu, b)$; see
Section 7 in \cite{sato:2013}. 
The key assumption we will impose in the sequel is that the
distribution $(\nu(1,\infty))^{-1}  [\nu]_{(1,\infty)}$ is
subexponential, from which it immediately follows that 
\begin{equation} \label{e:Xsubexp}
  \PP(X_n>x)\sim \nu(x,\infty)= : \bar \nu(x) \ \ \text{as $x\to\infty$}
\end{equation}
and, in particular, $X_n$  has a subexponential distribution; see
\cite{embrechts:goldie:veraverbeke:1979}. We will, in fact, impose a
number of additional assumptions on the L\'evy measure $\nu$. These
assumptions will guarantee that the tail of $X_n$ is light enough to
be in the maximum domain of attraction of the Gumbel distribution. On
the other hand, they will also guarantee that this tail is not ``too
light''.

\begin{assumption} \label{ass;nu}
\phantom{blank}
The distribution $(\nu(1,\infty))^{-1} [\nu]_{(1,\infty)}$ is
both subexponential and in the maximum domain of attraction of the
Gumbel distribution. Furthermore,  there is a distribution  $H_{\#}$
satisfying $\bar\nu(x)\sim a\overline{H_{\#}}$ for $a>0$, and which 
satisfies \eqref{eq;VM1}
  with $c\equiv 1$, i.e.
    \begin{align} 
          &\overline{H_{\#}}(x)= \exp \left ( -\int_{x_0} ^ x
            \frac{1}{h(u)}du    \right ) \text{ for } x>
            x_0\,,\label{eq;f2}    
    \end{align}
    and the auxiliary function $h$  with $h^\prime>0$ on
    $(x_0,\infty)$, and such that 
     \begin{equation} \label{e:quasi.l1}
      \lim_{b\downarrow 1}\limsup_{x\to\infty}\frac{h(bx)}{h(x)}=1\,.
      \end{equation} 
Denoting   
\begin{equation} \label{eq;G}
  G(x) := \left(  \frac{1}{1 - H_{\#}}  \right ) ^ \leftarrow(x), \
  x\geq 0\,, 
\end{equation}
we assume that the function $G$ is of the form
\begin{equation} \label{e:rep.G}
G(x)=   \exp \left \{  \int_{e} ^ x  \frac{\zeta(u)}{u \log u} du
\right \} ,\ x>x_1, \ \ \text{for some $x_1>e$,}
\end{equation} 
where  $\zeta$  satisfies the following
assumptions. 
\begin{enumerate} 
\item[(B1)] $\zeta$ is roughly increasing, i.e.,  
$$  \zeta (x) \asymp \sup_{[1,x]} \zeta (u)   . $$

\item[(B2)] There exists some  $\delta>0$ such that
$$
(\log \log u ) ^ \delta
\ll \zeta (u) \lesssim
\frac{\log u}{\log \log u} .
$$

\item[(B3)]  For the $\delta>0$ in (B2) and for all small $\rho>0$,
$$
 \zeta \left(x^{1-\rho \big / (\log\log x) ^ {\delta \wedge 1}  } \right )\gtrsim \zeta(x)\,.
$$
 
\item[(B4)] For any $c>0$,  
$$
\liminf_{x \to \infty} \int_{x^{1 -c  / \zeta(x)  }} ^ x 
\frac{\zeta(u)}{u \log u } d u  > 0.
$$
\end{enumerate} 

\end{assumption}

We check in Appendix \ref{app;supp} below that the following 
 two important classes of L\'evy measures with subexponential tails
 satisfy Assumption \ref{ass;nu}. 
\begin{example}[lognormal-type tails] \label{eg;lognm1}
\phantom{blank}

$$
\overline{\nu} (x) \sim c x ^ \beta (\log x )^\xi 
\exp\left(   -\lambda (\log x )^ \gamma  \right ) \ \ \text{as $x\to\infty$}
$$
for some $\gamma >1$, $\lambda, c>0$ and 
$\beta,\xi \in \mathbb{R}$. 
\end{example}

\begin{example}[super-lognormal-type tails] \label{eg;suplognm1}
  \phantom{blank}

$$
\overline{\nu} (x) \sim c x ^ \beta (\log x )^\xi 
\exp\left( \lambda (\log x )^ \gamma  \right )
\exp \left(  - \rho  \exp \left( \mu (\log x)^\alpha  \right)
\right )  \ \ \text{as $x\to\infty$}
$$
for some $\alpha \in (0,1)$, $c,\mu, \rho >0$  and
$\beta,\xi,\lambda,\gamma \in \mathbb{R}$. 
\end{example}

\begin{remark}
  \phantom{blank}

The  semi-exponential-type tails such as 
$\overline{\nu}(x) \sim \exp (  - x^\alpha ), \, 0 < \alpha < 1$,
unfortunately, do not satisfy the assumptions and, hence, are 
excluded from our analysis.      
\end{remark}

The following proposition, proved in Appendix \ref{app;supp},
lists certain properties of L\'evy measures satisfying  Assumption
\ref{ass;nu}. We will find these properties useful in the sequel. Let
$\delta>0$ as in Assumption \ref{ass;nu} $(B2)$. 

\begin{proposition} \label{prop;MTG4}
\phantom{blank}

\begin{enumerate} [label=(\roman*)]
 \item       $ G(x) \gg \exp \left\{ (\log \log u )^{1+
        \delta}/(1+\delta)  \right \} $. 
\item     For  any $\alpha_1 > \alpha_2 > 0$, for any $b<\log \alpha_1 - \log \alpha_2$, 
\begin{equation} \label{eq;MTG4.3}
\frac{G(x^{\alpha_1}  )}{G(x^{\alpha_2} )} 
\gg
\exp \left\{   b 
 ( \log \log x  ) ^ \delta  
\right \}\,.
\end{equation}
\item  For  any $H_i \in \text{RV}_{\alpha_i}$, $i=1,2$, 
$\alpha_1 > \alpha_2 > 0$, for any $b<\log \alpha_1 - \log \alpha_2$, 
\begin{equation} \label{eq;MTG4.4}
\frac{h\circ G(H_1 (x)  )}{h\circ G( H_2(x) )} 
\gg \exp \left\{ b ( \log \log x  ) ^ \delta  
\right \}\,.
\end{equation}

\item   For any $\alpha \not=0$, 
    \begin{equation} \label{eq;MTG4.5}
        \left \vert 
        G(x (\log x )^\alpha ) - G(x )
        \right \vert  \asymp  
       ( \log \log x  ) h \circ G (x) \,.
    \end{equation} 
\item  For all sufficiently small  $\rho>0$, 
\begin{equation} \label{eq;MTG4.6}
   \min_{1\leqslant j \leqslant \rho \log x / \zeta(x)}  \frac{G(x) -
     G\left( x  2^{-j} \right )}{j}
\gtrsim  j h\circ G (x)\,.
\end{equation}
 \end{enumerate}
\end{proposition}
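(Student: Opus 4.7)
The plan is to first derive the key functional identity that ties together $G$, the auxiliary function $h$, and $\zeta$. Since by definition $\overline{H_\#}(G(x)) = 1/x$, differentiating gives $G'(x) = h(G(x))/x$, while differentiating \eqref{e:rep.G} gives $G'(x) = G(x)\zeta(x)/(x\log x)$. Equating the two yields
\[
h(G(x)) \;=\; \frac{G(x)\,\zeta(x)}{\log x},
\]
which is the bridge used to translate all statements about $h\circ G$ into integrals of $\zeta/(u\log u)$ against $G$. Part (i) then follows from integrating the lower bound $\zeta(u) \gg (\log\log u)^\delta$ from (B2), via the substitution $v=\log\log u$: one obtains $\log G(x) \gg (\log\log x)^{1+\delta}/(1+\delta)$. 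For (ii), one writes
\[
\log\frac{G(x^{\alpha_1})}{G(x^{\alpha_2})} = \int_{x^{\alpha_2}}^{x^{\alpha_1}} \frac{\zeta(u)}{u\log u}\,du,
\]
uses the exact identity $\int du/(u\log u) = d\log\log u$ to see that the $du/(u\log u)$-mass of $[x^{\alpha_2},x^{\alpha_1}]$ equals $\log\alpha_1-\log\alpha_2$, and combines this with (B2) and the fact that $(\log\log u)^\delta\sim(\log\log x)^\delta$ uniformly over that interval; taking the multiplier in (B2) arbitrarily large gives the $\gg\exp(b(\log\log x)^\delta)$ bound for every $b<\log\alpha_1-\log\alpha_2$.

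For (iii) I would expand the ratio using the key identity into three factors: $G(H_1(x))/G(H_2(x))$, $\zeta(H_1(x))/\zeta(H_2(x))$, and $\log H_2(x)/\log H_1(x)$. The last is $\sim\alpha_2/\alpha_1$, a positive constant. The middle factor is $\gtrsim 1$ since (B1) says $\zeta$ is roughly increasing and $H_1(x)\ge H_2(x)$ eventually. The dominant factor $G(H_1(x))/G(H_2(x))$ is handled by Potter-type sandwich bounds for regular variation: for any $\epsilon>0$, eventually $x^{\alpha_i-\epsilon}\le H_i(x)\le x^{\alpha_i+\epsilon}$, so monotonicity of $G$ and part (ii) applied to $x^{\alpha_1-\epsilon}$ and $x^{\alpha_2+\epsilon}$ yield a lower bound $\gg\exp(b'(\log\log x)^\delta)$ with $b'$ arbitrarily close to $\log\alpha_1-\log\alpha_2$ as $\epsilon\downarrow 0$.

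Parts (iv) and (v) both proceed by writing the difference as an integral of $G'$ and showing that on the relevant short interval the three quantities $G(u)$, $\zeta(u)$, and $\log u$ are all comparable to their values at $x$. For (iv) on $[x,x(\log x)^\alpha]$ (or its reverse), $\log u \sim \log x$ is trivial, $G(u)\asymp G(x)$ follows from the bound $\log(G(u)/G(x)) = \int \zeta/(v\log v)\,dv \asymp \zeta(x)\log\log x/\log x \lesssim 1$ using the upper half of (B2), and $\zeta(u)\asymp\zeta(x)$ comes from combining (B1) (for $\zeta(u)\gtrsim\zeta(x)$ when $u\ge x$) with (B3) (for the reverse inequality: $u=x^{1+\alpha\log\log x/\log x}$ sits within the exponent window allowed by (B3) since $(\log\log x)^{1+\delta\wedge 1}\ll\log x$). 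The integral then evaluates to $\asymp G(x)\zeta(x)/\log x\cdot|\alpha|\log\log x = |\alpha|\log\log x\cdot h(G(x))$. For (v), the same strategy applies on $[x 2^{-j},x]$: the restriction $j\le \rho\log x/\zeta(x)$ is exactly what forces $G(x)/G(x 2^{-j})\le 2^\rho$ so that $G(u)\asymp G(x)$ holds uniformly in $j$, and (B1)/(B3) control $\zeta$; the integral then becomes $\asymp j\log 2\cdot h(G(x))$, and dividing by $j$ gives the claimed lower bound uniformly in the allowed range of $j$. The main obstacle throughout is the control of $\zeta$: because (B2) permits $\zeta$ to range between $(\log\log u)^\delta$ and $\log u/\log\log u$, $\zeta$ is not a classical slowly varying function, and one cannot appeal to a Potter-type representation. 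Assumption (B3) is the precise technical input that pins $\zeta(u)/\zeta(x)$ between constants on windows of the form $u=x^{1\pm o(1)}$, which is exactly the resolution needed in (iv) and (v).
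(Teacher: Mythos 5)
Your proof is correct and follows essentially the same approach as the paper: you derive the same key identity $h\circ G(x)=G(x)\zeta(x)/\log x$, handle (i)--(ii) by direct integration of $\zeta(u)/(u\log u)$ with the substitution $v=\log\log u$, decompose the ratio in (iii) into the same three factors treated via Potter's bounds and (ii), and prove (iv)--(v) by expressing the differences as integrals of $G'$ and using (B1)--(B3) to show that $G$, $\zeta$, and $\log$ are all pointwise comparable to their values at $x$ over the relevant windows. The only cosmetic gloss is the bound $G(x)/G(x2^{-j})\le 2^\rho$ in (v), which should carry an unspecified constant in the exponent (i.e.\ $\le 2^{C\rho}$) coming from the $\asymp$ in the $\zeta$-comparison, but this is immaterial since all one needs is boundedness.
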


\section{Extremal limit theorems} \label{sec;elt}

Let $(X_t)_{t\in\bbz}$ be a stationary infinitely divisible process
\eqref{eq;seqX}, associated with an irreducible, aperiodic, and
null recurrent Markov chain on  $\mathbb{Z}$. Recall that we assume
that the function $f$ is the indicator function \eqref{e:f.ind}. 
Our main result in this
section is a limit theorem for the sequence of random sup-measures
defined by the process via \eqref{e:sup.m.n}. The L\'evy measure $\nu$
of the infinitely divisible random measure $M$ in \eqref{eq;seqX} is
assumed to satisfy Assumption \ref{ass;nu}. We denote
\begin{equation} \label{e:V}
V(x) =  \left(  1/\bar\nu \right ) ^ \leftarrow(x), \
x> 0\,. 
\end{equation}
The Markov chain underlying the process $(X_t)_{t\in\bbz}$ is 
assumed to satisfy Assumption \ref{ass;YRV0}. We define for
$n=1,2,\ldots$ 
\begin{equation} \label{e:bn.X}
  b_n  = V(w_n) +  V\bigl( 1/\overline{F} (n)\bigr), \ \ a_n  = h\circ V(w_n)\,,
\end{equation}
with $w_n$ is the wandering rate in \eqref{eq;wdr}, $F$ is the first
return time law in
\eqref{eq;varphi1}, and $h$ the auxiliary function in \eqref{eq;VM1}. 
 
\begin{theorem} \label{thm;main}
\phantom{blank}

Assume that Assumption \ref{ass;nu} holds, and that Assumption
\ref{ass;YRV0} is satisfied with $0<\beta<1/2$. If $(a_n), (b_n)$ are
given by \eqref{e:bn.X}, then 
\begin{equation} \label{eq;main1}
    \frac{M_n(\cdot)-b_n}{a_n} \Rightarrow \mathcal{M}(\cdot)
\end{equation}
weakly in the space of sup-measures on $[0,1]$, where $(M_n)$
are the random sup-measures in \eqref{e:sup.m.n}, and the limiting random
sup-measure $\mathcal{M}$ is given by \eqref{eq;limRSM1}. 
\end{theorem}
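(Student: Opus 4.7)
The plan is to reduce the sup-measure convergence in \eqref{eq;main1} to finite-dimensional convergence and then to analyze a Poisson series representation for $M$ under a two-level truncation of atom sizes. Stable regenerative sets almost surely do not hit fixed points, so every open interval is a continuity interval of $\mathcal{M}$; by \eqref{eq;rsmWC2} it suffices to establish
$$
\bigl(a_n^{-1}(M_n(I_k)-b_n)\bigr)_{k=1}^{m}\Rightarrow (\mathcal{M}(I_k))_{k=1}^{m}
$$
for arbitrary disjoint open intervals $I_1,\dots,I_m\subset[0,1]$. Writing the Poisson atoms of $M$ as $\{(\epsilon_j,Y^{(j)})\}_{j\geqslant1}$ on $\mathbb{R}\times E$ with intensity $\nu\otimes\mu$, I have $X_t=\sum_j\epsilon_j\mathbf{1}\{Y^{(j)}_t=0\}$ modulo the deterministic drift, which allows an atom-by-atom analysis.

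Fix a large truncation constant $K$, set $c_n:=V(w_n)-Ka_n$, and call atoms with $\epsilon_j>c_n$ ``big'' and the rest ``small'', with a corresponding decomposition $X_t=X_t^{\mathrm{big}}+X_t^{\mathrm{small}}$. The expected number of big atoms whose trajectory visits $0$ in $[0,n]$ equals $\overline{\nu}(c_n)w_n\to e^{K}$, by Assumption \ref{ass;nu} and \eqref{e:small.diff}. Conditional on visiting $0$ in $[0,n]$ each such trajectory has law $\mu_n$ from \eqref{eq;mun}, so a joint Poisson-thinning argument together with $n^{-1}I_{j,n}\Rightarrow \overline{R_j}$ in \eqref{eq;mtg3.1} and the asymptotic disjointness \eqref{eq;mtg3.4} delivers the joint convergence of the decreasingly-ordered pairs $\bigl((\epsilon_{(j)}-V(w_n))/a_n,\,n^{-1}I_{(j),n}\bigr)$ to the points $(U_j,R_j)$ of the Poisson process in \eqref{eq;limRSM1} satisfying $U_j>-K$; letting $K\to\infty$ recovers the full limit, and \eqref{eq;SRScap} ensures the limiting visit sets are pairwise disjoint.

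The crucial step is to identify the collective contribution of the small atoms inside a big cluster. Since the big atom contributes $\epsilon_{(j)}$ at every visit time,
$$
\max_{t\in I_{(j),n}} X_t = \epsilon_{(j)} + \max_{t\in I_{(j),n}} X_t^{\mathrm{small}}.
$$
The marginal law of $X_t^{\mathrm{small}}$ is infinitely divisible with L\'evy measure $\nu$ restricted to $(-\infty,c_n]$ (since $\pi_0=1$), so its upper tail agrees with $\overline{\nu}$ well below $c_n$ and thus lies in the Gumbel MDA with auxiliary function $h$. Although the values $\{X_t^{\mathrm{small}}:t\in I_{(j),n}\}$ are dependent through shared Poisson atoms, the intersection-probability bounds of Theorem \ref{thm;istP} imply that the effective number of essentially independent sample points in $I_{(j),n}$ is of order $|I_{(j),n}|\asymp \kappa_n =1/\overline{F}(n)$; combined with the fine control of $h$ across scales in Proposition \ref{prop;MTG4}, this yields
$$
\max_{t\in I_{(j),n}} X_t^{\mathrm{small}} = V\bigl(1/\overline{F}(n)\bigr) + o_P(a_n).
$$

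Assembling the pieces, $\max_{t\in I_{(j),n}} X_t = \epsilon_{(j)} + V(1/\overline{F}(n)) + o_P(a_n)$ inside every big cluster, so the summand $V(1/\overline{F}(n))$ of $b_n$ exactly cancels the small-atom contribution. Using \eqref{eq;mtg3.4} to replace $\max_{t\in nI_k}$ by the maximum over the big atoms whose clusters intersect $I_k$, one obtains
$$
\frac{M_n(I_k)-b_n}{a_n}=\max_{j:\,n^{-1}I_{(j),n}\cap I_k\neq\emptyset}\frac{\epsilon_{(j)}-V(w_n)}{a_n}+o_P(1)\ \Rightarrow\ \mathcal{M}(I_k)
$$
jointly in $k$ (passing $n\to\infty$ first and then $K\to\infty$). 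The hardest part is expected to be the identification of the centering $V(1/\overline{F}(n))$ for the maximum of the dependent small-atom contributions within a cluster; this is precisely where the fine cluster estimates of Theorem \ref{thm;istP} must be combined with the tail control of Proposition \ref{prop;MTG4} to prevent any correction of order at least $a_n$ from leaking through.
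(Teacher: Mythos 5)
Your high-level plan matches the paper's: reduce to finite-dimensional convergence, expand $M$ into Poisson atoms, split atoms into ``big'' and ``small'', identify the small-atom contribution within a big cluster as $V\bigl(1/\overline{F}(n)\bigr)+o_P(a_n)$, and argue that the remaining atoms are negligible. (The paper parameterizes big atoms by the $K$ largest $V(w_n/\Gamma_j)$ via a series representation and works with $\widehat{I}_{k,n}$, rather than by a threshold $c_n=V(w_n)-Ka_n$, but these are essentially equivalent.) However, the argument as written has two genuine gaps in exactly the places you flag as the hardest.

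First, the assertion $\max_{t\in I_{(j),n}} X_t^{\mathrm{small}} = V\bigl(1/\overline{F}(n)\bigr)+o_P(a_n)$ is the crux and is not proved: the heuristic that the ``effective number of essentially independent sample points'' is $\asymp 1/\overline{F}(n)$ is not quantitative enough. The paper has to prove this two-sidedly: the upper bound is a union bound over $\#I_{1,n}$, which by \eqref{eq;obsB1.3} is bounded by $c\,n^\beta\log n/L(n)$ rather than $1/\overline{F}(n)=n^\beta/L(n)$ --- a $\log n$ discrepancy that must then be absorbed via \eqref{eq;MTG4.5}; the matching lower bound is delicate and is exactly what Theorem \ref{thm;istP}(iii) and the ensuing geometric argument with $\overline{p}_{n,T}$ and $j_1$ are for, followed by \eqref{eq;top3.6}, which again needs Proposition \ref{prop;MTG4}. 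Nothing in your proposal shows how to actually get both bounds, and the $\log n$ factors are not innocuous: they are killed only through the fine properties $(B1)$--$(B4)$. Second, you invoke \eqref{eq;mtg3.4} to ``replace $\max_{t\in nI_k}$ by the maximum over the big atoms whose clusters intersect $I_k$,'' but \eqref{eq;mtg3.4} only says that for any fixed $m$ the sets $I_{j,n}$, $j\le m$, become disjoint; it does not rule out that the overall maximum occurs outside every big cluster, driven by an accumulation of medium atoms. Establishing that is the content of the paper's Proposition \ref{prop;ELT(2)}, which requires a dyadic decomposition over $2^{i_0}\le k<2^{i_n}$, the subexponential tail comparison via $h$ and $(B1)$, $\eqref{e:quasi.l1}$, and a separate Poisson-compound-sum bound for $k\ge 2^{i_n}$; your sketch does not address any of this. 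So the skeleton is right, but the two load-bearing estimates are asserted rather than proved, and both require the full strength of Assumption \ref{ass;nu}, Proposition \ref{prop;MTG4}, and Theorem \ref{thm;istP}.
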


There is a natural counterpart of Theorem \ref{thm;main} that
establishes an extremal limit theorem in a function space. Recall that
a standard Gumbel extremal process is a 
nondecreasing process $\bigl( X_{\rm G}(t),\, t> 0\bigr)$  
satisfying
$$
\PP\bigl( X_{\rm G}(t_i)\leq x_i, \, i=1,\ldots, k\bigr) =\exp\left\{
  -\sum_{i=1}^k(t_i-t_{i-1})e^{-x_i}\right\} 
$$
for $0<t_1<\ldots<t_k$ and $x_1\leq \ldots\leq x_k$. The process
is continuous in probability and has a version in
$D(0,\infty)=\cap_{\vep>0}D[\vep,\infty)$; see
\cite{resnick:rubinovitch:1973}. It is immediate from the definition
of the random
sup-measure $\mathcal{M}$ in \eqref{eq;limRSM1} that
\begin{equation}\label{e:supm.to.extremal}
\bigl( \mathcal{M}([0,t]), \, t>0\bigr) \eid \bigl( X_{\rm G}(t^{1-\beta}) \,;
t>0\bigr)\,,
\end{equation}
see also \cite{lacaux:samorodnitsky:2016}. 
Note that the finite-dimensional convergence part in the following
theorem already follows from Theorem \ref{thm;main}. 
 \begin{theorem} \label{thm;extremal}
   \phantom{blank}
 
Under the assumptions of Theorem \ref{thm;main}, 
\begin{equation} \label{eq;main1}
    \left(\frac{\max_{s\leq nt}X_s-b_n}{a_n}, \, t>0\right)\Rightarrow
    \bigl( X_{\rm G}(t^{1-\beta}),\, t> 0\bigr) 
\end{equation}
weakly in the Skorohod $J_1$ topology on $D(0,\infty)$. 
\end{theorem}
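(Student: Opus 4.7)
I plan to reduce Theorem~\ref{thm;extremal} to the sup-measure convergence in Theorem~\ref{thm;main}. The finite-dimensional part is essentially automatic: for any $0 < t_1 < \cdots < t_k$, the disjoint open intervals $(t_{i-1}, t_i)$ with $t_0=0$ all lie in $\mathscr{I}(\mathcal{M})$, since $\beta$-stable regenerative sets almost surely avoid any fixed point. Applying \eqref{eq;rsmWC2} on these intervals and combining with the identity $\max_{s \leq nt_j} X_s = \max_{1\leq i \leq j} M_n\bigl((t_{i-1}, t_i]\bigr)$ yields joint weak convergence of $\bigl((\max_{s \leq nt_i} X_s - b_n)/a_n\bigr)_{i=1}^k$ to $\bigl(\mathcal{M}((0, t_i])\bigr)_{i=1}^k$, and the latter has the law of $\bigl(X_{\rm G}(t_i^{1-\beta})\bigr)_{i=1}^k$ by \eqref{e:supm.to.extremal}.

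The substantive task is $J_1$-tightness in $D(0,\infty)$, which I would establish on each $D[\epsilon, T]$, $0 < \epsilon < T$. Write $Y_n(t) := (\max_{s \leq nt} X_s - b_n)/a_n$; each $Y_n$ is nondecreasing and c\`{a}dl\`{a}g. For such processes, Billingsley's Skorohod modulus satisfies $w'(Y_n, \delta) \leq \max_{1 \leq i \leq K} (Y_n(t_i -) - Y_n(t_{i-1}))$ over any partition $\epsilon = t_0 < \cdots < t_K = T$ of mesh $\geq \delta$; moreover, placing any jump of $Y_n$ of size exceeding a chosen level $\eta$ at a partition boundary removes it from all adjacent increments. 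Hence tightness will follow once we produce, for every $\eta > 0$, a mesh-$\geq \delta$ partition at whose boundaries all jumps of $Y_n$ exceeding $\eta$ are located, and on whose intervals all remaining increments are bounded by $\eta$, with probability $\geq 1 - \eta$ uniformly in $n$.

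Both requirements are consequences of Theorem~\ref{thm;main} plus the Poisson structure of $\mathcal{M}$. For the first, the atoms $(U_j, V_j^*, Z_j)$ of the Poisson process generating $\mathcal{M}$ with $U_j > \eta$ and $V_j^* \in [\epsilon, T]$ are almost surely finitely many, and their locations are pairwise separated by a strictly positive random distance; sup-measure convergence applied to small continuity neighborhoods of these points aligns the large-jump locations of $Y_n$ with them as $n \to \infty$, yielding the required spacing lower bound in probability. For the second, \eqref{eq;rsmWC2} applied to any short continuity interval $I$ gives $(\max_{s \in nI} X_s - b_n)/a_n \Rightarrow \mathcal{M}(I)$, and $\PP(\mathcal{M}(I) > \eta) \to 0$ as $|I| \to 0$ since $\mathcal{M}$ has no fixed atoms. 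The main obstacle will be coordinating these two approximations --- showing that the partition can be chosen to fit all large jumps and simultaneously control small-jump accumulation, uniformly in $n$. This is a standard truncation-and-approximation argument but requires care; nothing essentially new beyond Theorem~\ref{thm;main} is needed.
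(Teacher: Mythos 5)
Your finite-dimensional argument is sound: the identity $\max_{s\le nt_j}X_s = \max_{1\le i\le j}M_n((t_{i-1},t_i])$ together with \eqref{eq;rsmWC2} and \eqref{e:supm.to.extremal} does give joint convergence at fixed times, since the regenerative sets hit no fixed point. The gap is in the tightness step, and it is a real one rather than a matter of ``coordination.''

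You assert that ``sup-measure convergence applied to small continuity neighborhoods of these points aligns the large-jump locations of $Y_n$ with them.'' This does not follow. Sup-measure convergence, and indeed any form of finite-dimensional convergence of $M_n(\cdot)$ over open intervals, is blind to jump-splitting inside a single interval: if the pre-limit has a record of size $c-\delta$ at location $v-1/n$ followed by a record of size $c$ at location $v$, while the limit has only the single record $c$ at $v$, then $M_n(I)\to\mathcal{M}(I)$ for every fixed open interval $I$, yet the running-maximum processes do not converge in $J_1$ (no time change can erase the intermediate value $c-\delta$). So the claimed ``spacing lower bound in probability'' is precisely the anti-jump-splitting condition, and it cannot be extracted from Theorem~\ref{thm;main}; it requires additional structural information about the pre-limit, namely that the records of $Y_n$ are produced one-per-level-set by the series \eqref{eq;main1.7}, with the $j$-th record set at $\min\{I_{j,n}\}/n$.

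The paper sidesteps this by not going through the fdd-plus-tightness route at all. It reuses the Skorohod coupling already built for Theorem~\ref{thm;main}, on which $I_{j,n}/n\to\overline{R_j}$ a.s.\ in the Fell topology; this gives $\min\{I_{j,n}\}/n\to\min\overline{R_j}$ a.s., which is exactly the pointwise alignment of jump locations you were trying to infer. Combined with \eqref{e:first.t}, this yields a.s.\ $J_1$-convergence of each indicator-type process $M_{n,(k)}([0,\cdot])$ to $\mathcal{M}_{(k)}([0,\cdot])$. Because $\beta<1/2$ makes the regenerative sets a.s.\ disjoint, the finite maxima $M_{n,K}$ converge a.s.\ as well, and the truncation error is handled by the uniform-over-$t$ version of Proposition~\ref{prop;ELT(2)} that is already contained in its proof (both \eqref{eq;mid1} and \eqref{eq;btm1.1} are uniform over $[0,1]$). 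In short, the extra information your argument is missing — which pre-limit record belongs to which limit atom — is exactly what the a.s.\ coupling supplies, and there does not appear to be a way to recover it from Theorem~\ref{thm;main} as a black box.
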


\begin{remark}\label{rk:key}
  {\rm 
Let us return to the discussion in Introduction of this paper and
compare the   statement of Theorems \ref{thm;main} and 
\ref{thm;extremal} 
to the ``expected behavior'' of the extreme values presented in
\eqref{e:Mn.clusterL}. The results of \cite{lacaux:samorodnitsky:2016}
and \cite{samorodnitsky:wang:2019} in the case of regularly varying
tails suggest that $m_n=w_n$,  and the centering and the 
normalization in \eqref{e:Mn.clusterL}  do not appear to be
consistent with Theorema \ref{thm;main} and 
\ref{thm;extremal} due to the presence of an
extra term $V\bigl( 1/\overline{F} (n)\bigr)$ in the centering
sequence. It turns out, however, that  as
long as the tails of the process $(X_t)_{t\in\bbz}$ are ``not too
light'' we have
\begin{equation} \label{e:no.second}
  \lim_{n\to\infty}\frac{V\bigl( 1/\overline{F} (n)\bigr)}{a_n}=0\,,
\end{equation}
and so \eqref{e:Mn.clusterL} does predict the correct centering and the 
normalization. Once the tails of the process become lighter, however,
\eqref{e:no.second} may fail, and a different centering becomes
necessary. We can see this phenomenon on Examples \ref{eg;lognm1}
and \ref{eg;suplognm1}. In fact, for the  lognormal-type tails of Example
\ref{eg;lognm1} the relation \eqref{e:no.second} holds, while for the
super-lognormal-type tails of Example \ref{eg;suplognm1},
\ref{eg;lognm1} holds if $0<\alpha<1/2$ and fails if
$1/2<\alpha<1$. These claims are verified in Appendix \ref{app;supp}. 
}
\end{remark}

We will prove  the two theorems in the remainder of this
section, beginning with Theorem \ref{thm;main}. 
We start with a preliminary analysis that will split the
proof into several steps.  First of all, 
by   (\ref{eq;rsmWC2}),  we
need to prove that  for arbitrarily disjoint open intervals   $  I_1,
\ldots, I_m $ in $ [0,1]$, 
\begin{equation} \label{eq;main1.1}
    \left( \frac{ M_n (I_i)  - b_n }{a_n}  \right)_{i=1,\ldots,m} \Rightarrow 
    \left( \mathcal{M}(I_i)  \right)_{i=1,\ldots,m}
  \end{equation}
  weakly in $\bbr^m$. Note, further, that for any $0<\vep<a$ 
the function $V$ in \eqref{e:V} satisfies
\begin{equation} \label{e:V.G}
  G\bigl(x(a-\vep)\bigr)\leq V(x) \leq G\bigl(x(a+\vep)\bigr)
\end{equation}
for all $x$ large enough. Next, we decompose the  stationary process
$(X_t)_{t\in\bbz}$ as 
follows. Let $M^{(1)}$ and $M^{(2)}$ be two independent infinitely
divisible random
measures on $(E,\mathcal{E})$, both with with the same control measure
$\mu$ as the measure $M$ in \eqref{eq;seqX}. 
 With $(\sigma^2, \nu,b )$ being the local
characteristic triple of $M$, we set the local characteristic triple
of $M^{(1)}$ to be $(0, [\nu ]_{(x_0,\infty)},0)$, and  the local
characteristic triple of $M^{(2)}$ to be 
$(\sigma^2, [\nu ] _{(-\infty,x_0]},b )$, with $x_0$ as in
\eqref{eq;f2}.  If we define for   each $t\in \mathbb{Z}$ 
\begin{equation} \label{eq;main1.2}
    X^{(1)}_t  = \int_E f\circ\theta^t(x)\, M^{(1)}(dx) , \quad 
    X^{(2)}_t  = \int_E f\circ\theta^t(x)\, M^{(2)}(dx)\,,
\end{equation}
then  $\{ X^{(i)}_t  \} _{t \in \mathbb{Z}}, i=1,2$ are two
independent stationary infinitely divisible processes such that 
    $ \{ X_t  \} _{t \in \mathbb{Z}} \stackrel{d}{=}  \{X^{(1)}_t  +
    X^{(2)}_t\}_{t \in \mathbb{Z}}$. 
For $i=1,2$ we let  $ M^{(i)}_n (\cdot)$  be the random sup-measure defined for 
$\{X^{(i)}_t\}_{t\in \mathbb{Z}}$   as in \eqref{e:sup.m.n}. The
following proposition shows that $M_n^{(2)}$ is asymptotically
negligible with our scaling. 

\begin{proposition} \label{prop;X2n} $M^{(2)}_n([0,1])/a_n\to 0$ as
  $n\to\infty$. 
\end{proposition}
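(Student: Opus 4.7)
My plan is to exploit the fact that the L\'evy measure of $M^{(2)}$ is truncated from above at $x_0$, which makes the one-dimensional marginals of $X^{(2)}$ have extremely light right tails (finite moment generating function on all of $\mathbb{R}_+$). Given that the normalization $a_n$ grows faster than any polynomial in $\log n$ under Assumption \ref{ass;nu}, a simple exponential Markov bound together with a union bound will close the argument.

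First I would observe that $X^{(2)}_1$ is infinitely divisible with characteristic triple $(\sigma^2, [\nu]_{(-\infty, x_0]}, b)$, so its L\'evy measure has support bounded above by $x_0$. Since $\int_{(1,\infty)} e^{\lambda x}\, [\nu]_{(-\infty, x_0]}(dx)$ is a finite integral over the bounded interval $(1, x_0]$, the cumulant
$$
\Psi(\lambda) := \log E\bigl[ e^{\lambda X^{(2)}_1}\bigr]
$$
is finite for every $\lambda>0$. By the stationarity of $(X^{(2)}_t)_{t\in \mathbb{Z}}$, Markov's inequality, and a union bound,
$$
P\bigl( M^{(2)}_n([0,1]) > \epsilon a_n\bigr) \leq (n+1)\, e^{-\lambda \epsilon a_n + \Psi(\lambda)}
$$
for every $\epsilon,\lambda>0$. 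The proposition will therefore follow the moment $I$ verify that $a_n/\log n \to \infty$.

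The remaining step, and the only mildly delicate one, is this lower bound on $a_n$. Differentiating both $\overline{H_{\#}}(G(x)) = 1/x$ (from the definition \eqref{eq;G}) and the representation \eqref{e:rep.G} gives the identity
$$
h\bigl( G(x) \bigr) \;=\; x G'(x) \;=\; \frac{G(x)\,\zeta(x)}{\log x}
$$
for all large $x$. Using \eqref{e:small.diff} and the continuity property \eqref{e:quasi.l1}, together with the elementary fact that $V(x) \sim G(ax)$ and $G(ax)/G(x) \to 1$ (which itself follows from the upper bound $\zeta(u)\lesssim \log u/\log\log u$ of (B2) via a short integral estimate), one gets $a_n = h\circ V(w_n) \sim h\circ G(w_n)$. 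Since (B2) implies $\zeta(x) \gtrsim 1$ and Proposition \ref{prop;MTG4}(i) provides $G(x) \gg \exp\{(\log\log x)^{1+\delta}/(1+\delta)\}$, while $w_n\in \text{RV}_{1-\beta}$ yields $\log w_n \asymp \log n$, I would conclude
$$
a_n \;\gtrsim\; \frac{G(w_n)}{\log w_n} \;\gg\; \frac{\exp\bigl\{(\log\log n)^{1+\delta}/(1+\delta)\bigr\}}{\log n} \;\gg\; \log n,
$$
the last inequality because $(\log\log n)^{1+\delta} \gg \log\log n$ for any $\delta>0$.

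The main obstacle is precisely the verification in the last step that $a_n$ outpaces $\log n$. It is not intrinsically hard, but it does require assembling several estimates from Assumption \ref{ass;nu} (the lower bound on $\zeta$ in (B2)) and Proposition \ref{prop;MTG4}(i), together with the derivative identity for $h\circ G$. Once this is in hand, the exponential-moment argument in Steps 1--2 is completely routine.
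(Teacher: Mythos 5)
Your proposal is essentially the same as the paper's proof. Both rely on the two identical ingredients: (1) the L\'evy measure of $X^{(2)}_0$ is supported in $(-\infty,x_0]$, hence its right tail decays super-exponentially (the paper cites Theorem 26.1 of Sato for $\PP(X^{(2)}_0>r)=o(e^{-cr})$ for every $c>0$; your finite-$\Psi(\lambda)$ Chernoff bound is the same fact phrased via the moment generating function), and (2) a union bound over the $n+1$ terms is beaten because $a_n$ outpaces any fixed power of $\log n$, which in both write-ups comes from combining \eqref{e:V.G}, the identity $h\circ G(x)=G(x)\zeta(x)/\log x$, and Proposition~\ref{prop;MTG4}(i) (together with $\log w_n\asymp\log n$ from $w_n\in\text{RV}_{1-\beta}$). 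Your lower bound $a_n\gg\log n$ is a slightly weaker conclusion than the paper's $a_n\gg(\log w_n)^p$ for any $p$, but it suffices because you retain the factor $e^{\Psi(\lambda)}$ explicitly; the logic is the same.

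One small remark: your claim $a_n\sim h\circ G(w_n)$ is a touch stronger than what (B1) guarantees --- the rough-monotonicity of $\zeta$ gives $\asymp$ rather than $\sim$ for ratios like $\zeta(cx)/\zeta(x)$ --- but this is immaterial since only a one-sided lower bound on $a_n$ is used, exactly as the paper takes $a_n\geq h\circ G(aw_n/2)$.
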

\begin{proof}
 Since the L\'evy measure of $X^{(2)}_0$ is bounded on the right, 
$\mathbb{P}(X^{(2)}_0 > r )  =o( e^{-c r})$ for any  $c>0$  see
e.g. Theorem 26.1 in \cite{sato:2013}. Using the fact that
$\zeta(x)\to\infty$ we use \eqref{e:V.G} and part (i) of Proposition
\ref{prop;MTG4} to see that for any $p>0$,
for all  large $n$, 
\begin{align*}
a_n &\geq h\circ G(aw_n/2) =  \frac{G(aw_n/2)\zeta(aw_n/2)}{\log
      (aw_n/2)} \\
  &\gg \frac{G(aw_n/2)}{\log (aw_n/2)} \gg (\log w_n)^p\,,
\end{align*}
therefore, taking $p>1$ we have for any $\epsilon>0$
\begin{align*}
\mathbb{P} \left( M^{(2)}_n ([0,1] )> \epsilon a_n \right ) \leq &n
  \mathbb{P} \left( X^{(2)}_0 > \epsilon (\log w_n)^p \right ) \\
=&o\left(n  \cdot \exp\left\{ -\epsilon (\log w_n)^p \right\}\right ) \to 0
\end{align*}
by \eqref{e:wn}.  
\end{proof}

Proposition \ref{prop;X2n} implies that, in order to show 
 (\ref{eq;main1.1}), we need to prove that 
\begin{equation} \label{eq;main1.4}
     \left( \frac{ M_n ^{(1)}(I_i)  - b_n }{a_n}  \right)_{i=1,\ldots,m} \Rightarrow 
    \left( \mathcal{M}(I_i)  \right)_{i=1,\ldots,m}  \,,
\end{equation}
which we now carry out.  Consider a probability space (which we will
denote, with some abuse of notation, by $\ProbSpace$) supporting
i.i.d. random elements $\{ Y^{(j,n)}  \}_{j\in \mathbb{N}}$
distributed with the law $\mu_n$ in \eqref{eq;mun} for each 
$n=1,2,\ldots$, as well as i.i.d. random closed subsets of $[0,1]$,
$\{  \overline{R_j}\}_{j\in \mathbb{N}}$ distributed as in
(\ref{eq;tctSRS}) such that, with $I_{j,n}$ defined by \eqref{eq;Ijn}
we have 
\begin{equation}  \label{eq;main1.5}
   \frac{1}{n} I_{j,n}   \to 
   \overline{R_j} \quad \text{a.s. as} \ \ n \to \infty \ \ \text{for
     each} \ \ j\in \mathbb{N};
 \end{equation}
this is possible by \eqref{eq;mtg3.1} and the Skorohod embedding. The
same probability space also supports a sequence $\{\Gamma_j \}_{j\in
  \mathbb{N}}$ of the arrival times of a unit rate Poisson process on
$\bbr_+$, independent of  $\{ I_{j,n}, \overline{R_j} : j, n \in
\mathbb{N} \}$.  The following series representation of the process
$\{X^{(1)}_t \}_{t \in \mathbb{Z}}$ is the key for our argument. It
follows from Theorem 3.4.1 in \cite{samorodnitsky:2016}. For each
$n\in \mathbb{N}$, 
\begin{equation}\label{eq;main1.7}
    \left( X^{(1)}_t \right )_{0 \leqslant t \leqslant n}
    \stackrel{d}{=} \left( \sum_{j=1} ^ \infty
    \tilde V\bigl( w_n/\Gamma_j  \bigr)\one_{\{ t\in I_{j,n} \} }\right)_{0
    \leqslant t \leqslant n}  \,, 
\end{equation}
where
\begin{equation} \label{e:V.tilde}
\tilde V(y) = \left\{ \begin{array}{ll}
                        V(y) & \text{for $y>1/\bar\nu(x_0)$} \\
                        0 & \text{otherwise}
                      \end{array}
                    \right..
\end{equation}                 
 When proving (\ref{eq;main1.4}) we will simply assume that the process
$\{X^{(1)}_t \}_{t \in \mathbb{Z}}$  is given by the right hand side
of \eqref{eq;main1.7}. Furthermore, for notational simplicity we will
drop the ``tilde'' over $V$ in the sequel, while keeping in mind that
it vanishes for small values of the argument, as in
\eqref{e:V.tilde}. We now state several propositions that will
prove (\ref{eq;main1.4}).  
 
For $k\in \mathbb{N}$ we define, in the notation of  (\ref{eq;hatI1})
and (\ref{eq;hatIj}), 
 \begin{align*} 
        & M_{n,(k)} (B) = 
 \max_{t\in nB\cap \widehat{I}_{k,n}} \sum_{j=1}^\infty V \bigl( w_n/\Gamma_j\bigr)
\one_{\{ t \in {I}_{j,n}  \} },   \\ 
        & \mathcal{M}_{(k)} (B) = \left\{ 
\begin{array}{ll}
- \log \Gamma_k  & \text{if } \ \ \overline{R_k} \cap B \neq \emptyset \\
-\infty & \text{otherwise}
\end{array}  \right. .
 \end{align*}
 
 \begin{proposition}  \label{prop:new.k}
    For each $k\in \mathbb{N}$ and each open interval $I$ in $[0,1]$,
    \begin{equation} \label{eq;ELT1}
    \frac{M_{n,(k)}(I) - b_n}{a_n} \stackrel{P}{\longrightarrow}
    \mathcal{M}_{(k)} (I) . 
  \end{equation}
\end{proposition}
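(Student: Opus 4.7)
The plan is to split according to whether $\overline{R_k}\cap I$ is empty, and in the non-empty case to isolate the contribution of the single atom $V(w_n/\Gamma_k)$ from the residual noise produced by the other atoms.

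On $\{\overline{R_k}\cap I=\emptyset\}$: open intervals are continuity sets of $\overline{R_k}$ (stable regenerative sets have no fixed points), so $\overline{R_k}\cap\overline{I}=\emptyset$ almost surely on this event. The Fell convergence \eqref{eq;main1.5} together with the compactness of $\overline{I}$ then forces $I_{k,n}\cap nI=\emptyset$ for all large $n$, hence $M_{n,(k)}(I)=-\infty=\mathcal{M}_{(k)}(I)$ eventually, matching the limit.

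On $\{\overline{R_k}\cap I\neq\emptyset\}$: by \eqref{eq;mtg3.2} and \eqref{eq;mtg3.4} one may restrict to the subevent of probability tending to $\mathbb{P}(\overline{R_k}\cap I\neq\emptyset)$ on which $\widehat{I}_{k,n}\cap nI\neq\emptyset$ and $\widehat{I}_{j,n}=I_{j,n}$ for $j\leqslant k$. At every $t\in\widehat{I}_{k,n}$ the index $k$ appears in the sum while the indices $j<k$ are excluded, so
$$\sum_{j=1}^\infty V\bigl(w_n/\Gamma_j\bigr)\one_{\{t\in I_{j,n}\}}\;=\;V\bigl(w_n/\Gamma_k\bigr)+N_n(t),\quad N_n(t):=\sum_{j>k}V\bigl(w_n/\Gamma_j\bigr)\one_{\{t\in I_{j,n}\}}.$$
The first summand does not depend on $t$; the $\Pi$-variation of $G$ via \eqref{e:pi} and \eqref{e:V.G}, with the norming $a_n=h\circ V(w_n)$, yields $(V(w_n/\Gamma_k)-V(w_n))/a_n\stackrel{P}{\longrightarrow}-\log\Gamma_k$. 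Since $b_n=V(w_n)+V(1/\overline{F}(n))$, the proposition reduces to
$$\frac{\max_{t\in nI\cap\widehat{I}_{k,n}}N_n(t)-V\bigl(1/\overline{F}(n)\bigr)}{a_n}\;\stackrel{P}{\longrightarrow}\;0. \qquad (\ast)$$

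The display $(\ast)$ is the main obstacle. The guiding heuristic is that $\max_{t}N_n(t)$ is essentially $V(w_n/\Gamma_{\ast})$, where $\Gamma_{\ast}$ is the smallest $\Gamma_j$ with $j>k$ whose trajectory $I_{j,n}$ intersects $\widehat{I}_{k,n}\cap nI$. Conditionally on $Y^{(k,n)}$, $\Gamma_{\ast}$ is the first arrival of a Poisson thinning at rate $\overline{p}_{n,nI}$, and Theorem \ref{thm;istP}\,(i) gives $p_{n,nI}\asymp n^\beta/(w_n L(n))\asymp 1/(w_n\overline{F}(n))$, so that $(w_n/\Gamma_{\ast})\overline{F}(n)$ is tight and the $\Pi$-variation forces $|V(w_n/\Gamma_{\ast})-V(1/\overline{F}(n))|\lesssim_P h\circ V(1/\overline{F}(n))$. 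The decisive step is then \eqref{eq;MTG4.4} of Proposition \ref{prop;MTG4}, applied with $H_1=w_n\in\text{RV}_{1-\beta}$ and $H_2=1/\overline{F}(n)\in\text{RV}_{\beta}$ (this is where $\beta<1/2$ is used): it yields $h\circ V(1/\overline{F}(n))=o(h\circ V(w_n))=o(a_n)$, which is exactly $(\ast)$. Making the heuristic rigorous is the technical core: the upper bound uses the quenched estimate \eqref{eq;qch1} to control the thinning rate uniformly over $Y^{(k,n)}$ and to rule out the negligible event that two secondary trajectories $I_{j_1,n}, I_{j_2,n}$ both hit $\widehat{I}_{k,n}\cap nI$, while the lower bound uses \eqref{eq;qch2.1} to guarantee that at least one secondary trajectory with small $\Gamma_j$ does reach $\widehat{I}_{k,n}\cap nI$; the fluctuation bounds \eqref{eq;MTG4.5}--\eqref{eq;MTG4.6} then absorb all remaining log-scale errors. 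Combining $(\ast)$ with the earlier limit for $V(w_n/\Gamma_k)$ and the trivial case yields the proposition.
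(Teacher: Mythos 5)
Your decomposition is the one the paper uses: isolate the single atom $V(w_n/\Gamma_k)$, show its contribution accounts for the $V(w_n)-a_n\log\Gamma_k$ part, and reduce to showing the residual is $V(1/\overline{F}(n))+o(a_n)$. You also correctly identify the decisive observation, namely that $h\circ V(1/\overline{F}(n))=o(h\circ V(w_n))$ by \eqref{eq;MTG4.4} with $H_1=w_n\in\text{RV}_{1-\beta}$, $H_2=1/\overline{F}(n)\in\text{RV}_\beta$, which is precisely where $\beta<1/2$ enters. This agrees with the paper's two claims \eqref{eq;top1.3}--\eqref{eq;top1.4}.

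However, the heuristic sentence ``$p_{n,nI}\asymp 1/(w_n\overline{F}(n))$, so that $(w_n/\Gamma_*)\overline{F}(n)$ is tight'' is false as stated, and the subsequent bound $|V(w_n/\Gamma_*)-V(1/\overline{F}(n))|\lesssim_P h\circ V(1/\overline{F}(n))$ is too strong to be true. Theorem \ref{thm;istP}(i) is the \emph{annealed} statement; the relevant thinning rate is the \emph{quenched} $\overline{p}_{n,T}$, and parts (ii)--(iii) only pin it down to within polylogarithmic factors (a $\log n$ factor on the high side, a $(\log n)^{-\gamma\beta}L((\log n)^\gamma)$ factor on the low side). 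Consequently $w_n/\Gamma_*$ also has polylog fluctuations around $1/\overline{F}(n)$, and the best one gets is $|V(w_n/\Gamma_*)-V(1/\overline{F}(n))|\lesssim_P (\log\log n)\,h\circ V(1/\overline{F}(n))$ via \eqref{eq;MTG4.5}. This is still $o(a_n)$ --- but only because \eqref{eq;MTG4.4} yields a $\exp\{b(\log\log n)^\delta\}$ gap with $\delta>0$, which is exactly where Assumption \ref{ass;nu}(B2) is needed. You should replace the tightness claim with this polylog-tight version; otherwise the argument appears to work without the $\delta$-margin, and it doesn't.

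Your plan for the upper bound also diverges from the paper's, and as described it is not clearly sufficient. The quenched estimate \eqref{eq;qch1} bounds the probability that \emph{some} secondary trajectory hits $I_{1,n}\cap nT$, but the residual $\max_t N_n(t)$ involves a \emph{sum} over all secondary trajectories through a given $t$; excluding double hits of the interval does not control that sum. The paper instead bounds $\#I_{1,n}$ by $A_{c_1,n}=c_1\log n/\overline{F}(n)$ via \eqref{eq;obsB1.3}, and then applies a union bound over $t\in I_{1,n}$ together with the subexponential tail of the stationary value $\sum_{j\ge 2}V(w_n/\Gamma_j)\one\{0\in I_{j,n}\}$ (Resnick's Proposition~0.9); the polylog overshoot $V(A_{c_1,n})-V(1/\overline{F}(n))$ is then absorbed by \eqref{eq;MTG4.5} and \eqref{eq;MTG4.4}. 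Finally, \eqref{eq;MTG4.6} is not needed here --- it is used later in the proof of Proposition \ref{prop;ELT(2)}, not this one.
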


We define, further, for $K\in \mathbb{N}$,
  \begin{align*} 
        & M_{n,K} (B)  =  \bigvee_{k=1}^ K M_{n,(k)}(B), \\
       & \mathcal{M}_K (B)  = \bigvee_{k=1}^ K 
        \mathcal{M}_{(k)} (B). 
  \end{align*}
 It follows  from Proposition \ref{prop:new.k} that 
for each $K$ and each open interval $I$ in $[0,1]$, 
 \begin{equation} \label{e:up.toK}
    \frac{M_{n,K}(I) - b_n}{a_n} \stackrel{P}{\longrightarrow}
    \mathcal{M}_{K} (I) \,.
    \end{equation}
 Since it is also clear that for any open interval $I$ in $[0,1]$, as
 $K\to\infty$, 
  $$
    \mathcal{M}_K(I) \ \longrightarrow
    \mathcal{M} (I) \ \ \text{a.s.}
    $$
 if the limting sup-measure $\mathcal{M}$ is defined on the same
 probability space $\ProbSpace$ by \eqref{eq;limRSM2}, then 
the only remaining step to establish  (\ref{eq;main1.4}) is the
following claim. 
\begin{proposition} \label{prop;ELT(2)}
 For any open interval $I$ in $[0,1]$ and $\epsilon >0 $, 
\begin{equation} \label{eq;ELT4}
    \lim_{K \to \infty} \; \lim_{n \to \infty}
    \mathbb{P} \left(
    \left \vert 
    M_{n,K}(I) - M_n (I) 
    \right \vert \geqslant \epsilon 
    \right ) = 0 \,.
\end{equation}
\end{proposition}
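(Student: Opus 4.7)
The plan is to combine the identity $M_n(I)=M_{n,K}(I)\vee \max_{j>K}M_{n,(j)}(I)$---which holds because every $t\in nI$ with positive contribution belongs to a unique $\widehat{I}_{k,n}$, and on $\widehat{I}_{k,n}$ the sum inside $M_n(I)$ coincides with the one inside $M_{n,(k)}(I)$---with a tail estimate on the contribution of large-index atoms. For any $L>\epsilon>0$ I would decouple
\begin{align*}
\PP\bigl(M_n(I)-M_{n,K}(I)>\epsilon a_n\bigr)
&\leq \PP\bigl(M_{n,K}(I)<b_n-La_n\bigr)\\
&\quad{}+\PP\bigl(\max_{j>K}M_{n,(j)}(I)>b_n-(L-\epsilon)a_n\bigr).
\end{align*}
The first summand is controlled by \eqref{e:up.toK} together with the monotone limit $\mathcal{M}_K(I)\uparrow \mathcal{M}(I)$; since $\mathcal{M}(I)\leq -\log\Gamma_1<\infty$ and almost surely infinitely many $\overline{R_j}$ meet $I$, the limit $\mathcal{M}(I)$ is a.s.\ finite, so this summand can be made as small as we wish by letting first $n\to\infty$, then $K\to\infty$, and finally $L\to\infty$.

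The core task is thus to prove
$$\lim_{K\to\infty}\limsup_{n\to\infty}\PP\bigl(\max_{j>K}M_{n,(j)}(I)>b_n-L'a_n\bigr)=0$$
for every fixed $L'>0$. At $t\in\widehat{I}_{k,n}$ with $k>K$ the inner sum equals $V(w_n/\Gamma_k)+\sum_{j>k}V(w_n/\Gamma_j)\one\{t\in I_{j,n}\}$. The monotonicity of $V$ and $\Gamma_k\geq\Gamma_{K+1}$ bound the leading term by $V(w_n/\Gamma_{K+1})$, which by the $\Pi$-variation of $V$ (and the definitions of $a_n,b_n$) equals $b_n-V(1/\overline{F}(n))-a_n\log\Gamma_{K+1}+o(a_n)$. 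If, in addition, the cluster contribution can be shown to be at most $V(1/\overline{F}(n))+o_P(a_n)$ uniformly over such $t$, the two pieces combine into $b_n-a_n\log\Gamma_{K+1}+o_P(a_n)$; since $\Gamma_{K+1}\to\infty$ almost surely as $K\to\infty$, the claim follows at once.

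The principal obstacle is precisely this uniform cluster bound: one must establish
$$\sup_{t\in nI\setminus\bigcup_{k\leq K}I_{k,n}}\sum_{j>k(t)}V(w_n/\Gamma_j)\one\{t\in I_{j,n}\}\leq V(1/\overline{F}(n))+o_P(a_n),$$
with $k(t)$ the first-appearance index, uniformly in $t$. I would attack this using the same machinery that drives Proposition \ref{prop:new.k}, invoking Theorem \ref{thm;istP}: the annealed estimate \eqref{eq;anl1} pins down the typical cluster size, while the quenched bound \eqref{eq;qch1} supplies the concentration (uniform over $t\in nI$ up to probability $n^{-C}$) that makes a union bound over $t\in nI$ viable. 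The hypothesis $\beta<1/2$ is essential throughout, as it guarantees via \eqref{eq;SRScap} that distinct regenerative sets are almost surely disjoint, so the cluster contribution cannot compound across different indices $j$.
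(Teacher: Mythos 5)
Your decomposition of the event and your handling of the first summand via \eqref{e:up.toK} are fine, and you have correctly identified the ``core task'' as bounding $\max_{j>K}M_{n,(j)}(I)$. The gap is in your strategy for that core task, and it is not a technical gap but a conceptual one: the uniform cluster bound you posit,
\begin{equation*}
\sup_{t\in nI\setminus\bigcup_{k\le K}I_{k,n}}\ \sum_{j>k(t)}V(w_n/\Gamma_j)\one\{t\in I_{j,n}\}\ \le\ V(1/\overline F(n))+o_P(a_n),
\end{equation*}
is false in the generality in which the theorem is stated. Your proposal discards the decay of the leading term in $k$ by coarsely bounding it by $V(w_n/\Gamma_{K+1})$, and then asks the cluster term to stay below $V(1/\overline F(n))+o_P(a_n)$ \emph{uniformly in $k$}. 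But the paper's event $H_j$ shows the cluster piece $S_{n,(k)}$ is only controlled, for $2^j\le k<2^{j+1}$, up to $V(A_{c,n})+2j\,h\circ V(A_{c,n})$, where $A_{c,n}=c\log n/\overline F(n)$. The increment $V(A_{c,n})-V(1/\overline F(n))$ is indeed $o(a_n)$ by Proposition \ref{prop;MTG4}(iii)--(iv), but the term $2j\,h\circ V(A_{c,n})$ is not: $j$ ranges up to $i_n\sim\rho\log w_n/\zeta(w_n)$, and Proposition \ref{prop;MTG4}(iii) only gives $a_n/h\circ V(A_{c,n})\gg\exp\{b(\log\log n)^\delta\}$ with some $\delta$ that can be strictly less than $1$ (e.g.\ in Example \ref{eg;suplognm1} with $\alpha>1/2$). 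In that regime $i_n\,h\circ V(A_{c,n})/a_n$ does not go to $0$; in fact it can diverge. This is not an accident: it is precisely the breakdown of the ``single big jump'' heuristic that the paper is about, and the regime where it breaks is the one your bound needs.

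The paper avoids this by never pitting the cluster against a $j$-independent target. Instead \eqref{eq;mid1.4} compares the $j$-dependent cluster bound $V(A_{c,n})+2j\,h\circ V(A_{c,n})-V(1/\overline F(n))$ against the $j$-dependent gain $V(w_n)-V(w_n/2^{j-1})-C_1a_n\gtrsim j\,h\circ G(w_n)=j\,a_n$ in the leading term, and the factor of $j$ on both sides makes the gap $a_n\gg h\circ V(A_{c,n})$ decisive for every $j\le i_n$. That trade-off is the whole point of the dyadic blocking and of the choice of $i_n$. Your approach freezes the leading term at $V(w_n/\Gamma_{K+1})$ and therefore cannot exploit this.

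There is a second, independent gap: even with the dyadic trade-off handled, the regime $k\ge 2^{i_n}$ still has to be controlled, and the paper does that by an entirely different mechanism. There the cluster is not conditioned on a single big value but rather dominated by a compound-Poisson sum $\sum_{i\le N_n}\xi_i$ (via \eqref{eq;btm2.1}), and the crucial step is the concavity of $z\mapsto q(z)$, which localizes the infimum in \eqref{e:inf} at a corner of the box and yields the exponent $k_d\,q(x_n)$. Theorem \ref{thm;istP} --- the pairwise intersection estimates \eqref{eq;anl1} and \eqref{eq;qch1} that you invoke --- controls how often the ranges meet, but does not by itself control the aggregate size of the sum $\sum_{j>k}V(w_n/\Gamma_j)\one\{t\in I_{j,n}\}$ in this regime, exactly because many large $V$-values can stack. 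Absent both the dyadic trade-off and the compound-Poisson/concavity argument, the proposal does not prove the proposition.
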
 

We now prove Propositions \ref{prop:new.k}  and \ref{prop;ELT(2)}. 
\begin{proof} [Proof of Proposition  \ref{prop:new.k}]
 \phantom{blank}
 
We will only consider the case $I=(a,b)$ for some $0\leqslant a < b
\leqslant 1$, the other cases being similar. Note that, by
\eqref{eq;mtg3.3} it is enough to prove the proposition for
$k=1$. Since the normalized tail $\bar\nu$ is in the Gumbel maximum
domain of attraction, the function $V$ is $\Pi$-varying, so by
\eqref{e:pi} and \eqref{e:small.diff}  we have 
 \begin{equation} \label{e:first.t}
      \frac{V(w_n / \Gamma_1) - V(w_n)}{h\circ V(w_n)}
       \longrightarrow - \log \Gamma_1 \ \ \text{a.s. as
         $n\to\infty$. }
\end{equation}
Furthermore, by (\ref{eq;main1.5})
$$
     \one_{\{ I_{1,n} \cap n I \neq \emptyset  \} } 
     \longrightarrow 
     \one_{\{ \overline{R_1} \cap  I \neq \emptyset  \} }  \ \ \text{a.s. as
         $n\to\infty$. }
$$
Since 
 $ h \circ V(w_n)  = o( V(w_n) )$, we have 
 \begin{align*}  
     &\frac{
     V\left( w_n / \Gamma_1  \right) 
     \one_{\{ I_{1,n} \cap n I\neq \emptyset  \} }      -  V(w_n) }
     {h\circ V(w_n)}   \longrightarrow 
     \mathcal{M}_{(1)}(I)  \ \ \text{a.s..}
 \end{align*}
If we denote 
$$
     S_{n,(1)} (I) = M_{n,(1)}(I) - 
      V \bigl( w_n/\Gamma_1  \bigr) \one_{\{ I_{1,n} \cap n I \neq \emptyset  \} }\,,
$$
then the claim of the proposition will follow from the following two statements:
 \begin{equation}  \label{eq;top1.3} 
     \limsup_{n \to \infty} \frac{S_{n,(1)}(I)  - V\bigl( 1 /\overline F(n)\bigr)}{ h\circ V(w_n) }
     \leqslant 0  \quad \text{in probability} 
 \end{equation}    
and 
\begin{equation}  \label{eq;top1.4}  
  \mathbb{P} \left(
    \liminf_{n \to \infty} 
     \frac{S_{n,(1)}(I)  - V\bigl( 1/\overline F(n)\bigr)}{ h\circ V(w_n) }
\geqslant 0 
    \, \Big \vert \, 
    \overline{R_1} \cap I \neq \emptyset
    \right ) = 1 \,,
\end{equation}
 which we proceed to prove. We start with (\ref{eq;top1.3}). Note that 
 $$
 0 \leqslant S_{n,(1)} (I) \leqslant
 \max_{t \in  I_{1,n} } \sum_{j = 2} ^ \infty  
  V \bigl( w_n/\Gamma_j  \bigr) 
  \one_{\{ t \in I_{j,n}  \} } =: S_{n,(1)} .
 $$
Let $c_1, c_2>0$ be positive constants to be determined later and write
$A_{c_1,n} = c_1  \log n / \overline F(n) $. Then 
 \begin{align*}
    & \mathbb{P}\left( S_{n,(1)}  \geqslant 
     V\left (A_{c_1,n}  \right )  + c_2 h \circ V\left ( A_{c_1,n} \right ) \right ) \\
    \leqslant & A_{c_1,n} \cdot 
    \mathbb{P}\left(\sum_{j=2}^\infty 
    V \bigl( w_n/\Gamma_j  \bigr) 
    \one_{ \{  0 \in I_{j,n}  \}  }  \geqslant  
    V\left ( A_{c_1,n}  \right )  + c_2 h \circ V\left ( A_{c_1,n}  \right )
    \right )  \\ 
     &+ \mathbb{P} \left( \# I_{1,n} > A_{c_1,n}  \right) \\
     \leqslant & A_{c_1,n} \cdot 
    \mathbb{P}\left( X_{0,n}  \geqslant  
    V\left ( A_{c_1,n}  \right )  + c_2 h \circ V\left ( A_{c_1,n}  \right )
    \right )  
     + \mathbb{P} \left( \# I_{1,n} >  A_{c_1,n}  \right) 
     \\  =:&  A_{c_1,n} \cdot B_1 + B_2 . 
\end{align*}
 By    (\ref{eq;obsB1.3})  $B_2 \to 0$  if $c_1$ is large
 enough. Further, 
 $A_{c_1,n} \cdot B_1 \to e^{-c_2}$ by Proposition 0.9 in 
 \cite{resnick:1987}. Therefore, fix any 
 $\epsilon \in (0,1)$, we can choose  $c_1, c_2 >0$ such that 
 $$
 \mathbb{P}\left( S_{n,(1)}  \leqslant 
     V\left (A_{c_1,n}  \right )  + c_2 h\circ V\left ( A_{c_1,n} \right ) \right ) \geqslant 1 - \epsilon. 
 $$
 The claim   (\ref{eq;top1.3}) follows by
 \eqref{e:small.diff} and parts (ii),  (iii), (iv) of Proposition
 \ref{prop;MTG4}, 
 \begin{align*}
      V(A_{c_1,n}) - V \bigl(  1/\overline F(n) \bigr) 
      & = G(A_{c_1,n}) -   G \bigl(  1/\overline F(n) \bigr) \\
     & + o (h\circ G(A_{c_1,n})) + 
     o \left(h \circ  G \bigl(  1/\overline F(n) \bigr)
     \right )    \\
     & \lesssim  ( \log \log n ) h \circ G \bigl(  1/\overline F(n) \bigr) \\ 
     &  + o (h\circ G(A_{c_1,n})) + 
     o \left(h \circ  G \bigl(  1/\overline F(n) \bigr)
     \right ) \\
     &  =  o (h \circ V(w_n)) \,.
 \end{align*}

 We now prove (\ref{eq;top1.4}).   Let $\Omega_1= \{  I_{1,n} \cap n I
 \neq \emptyset  \}$. For  a fixed $\omega_1 \in \Omega_1$ we view 
   $ \left \{ \one_{ \{  I_{1,n} (\omega_1 )   \cap  
  I_{j,n} \cap n I \neq \emptyset \} }:  j=2,3,\ldots \right \} $ as
a Bernoulli sequence with the success probability 
$\overline{p}_{n,T}(\omega_1)$ in (\ref{eq;quench}). By Theorem
\ref{thm;istP}  $(\rom{2})$ and $(\rom{3})$, for every $0<\epsilon<1$
 and $\gamma > (1-2\beta)^{-1}$    we can choose new  $c_1,c_2>0$  
   such that the event 
  \begin{align*} 
    & D_1 : = \left\{
   \frac{c_1   L( (\log n)^\gamma )}{w_n \overline F(n) (\log n)^{\gamma
      \beta}} \leqslant \overline{p}_{n,T}  
   \leqslant \frac{c_2  \log n}{w_n \overline F(n)}
   \right \}  
  \end{align*}
satisfies $\mathbb{P} \left( D_1 \, \vert \, \Omega_1 \right )
   \geqslant 1 - \epsilon$ for all $n$ large enough. 

For
   $\omega_1\in\Omega_1$ we denote  
 $j_1=j_1(\omega_1) =\inf \{ j \geqslant 2 : 
I_{j,n} \cap I_{1,n}(\omega_1) \cap n I \neq \emptyset \}$ and note
that $S_{n,(1)} (I) \geqslant V(w_n / \Gamma_{j_1})$. Therefore, for
any $c_3>0$ we have 
\begin{align*}
&\mathbb{P} \left(    
S_{n,(1)} (I) \geqslant V\left(  
\frac{c_1}{c_3}  \cdot \frac{  L( (\log n) ^ \gamma ) }{ \overline F(n) (\log n) ^ {\gamma \beta} }
\right)
\,\Big \vert \,    \overline{R_1} \cap I \neq \emptyset \right ) \\
\geqslant &\mathbb{P} \left( D_1\cap\left\{ 
 V(w_n / \Gamma_{j_1})\geqslant V\left(  
\frac{c_1}{c_3}  \cdot \frac{  L( (\log n) ^ \gamma ) }{ \overline F(n) (\log n) ^ {\gamma \beta} }
\right) \right \}  
\,\Big \vert \,    \overline{R_1} \cap I \neq
            \emptyset   \right )  \\
\geqslant & \mathbb{P} \left( D_1\cap\left\{ \Gamma_{j_1}\leqslant
            c_3(\overline{p}_{n,T} ){^ {-1} }   \right\}   \,\Big \vert \,    \overline{R_1} \cap I \neq
            \emptyset        \right ) \\
\geqslant & \mathbb{P} \left( D_1\cap\left\{  j_1\leqslant
            (c_3/2)(\overline{p}_{n,T} ){^ {-1} } \right \} \,\Big \vert \,    \overline{R_1} \cap I \neq
            \emptyset     \right ) -\epsilon \\
\geqslant & \mathbb{P} \bigl( D_1\, \vert \,    \overline{R_1} \cap I \neq
            \emptyset\bigr) -2\epsilon \geqslant  
\mathbb{P} \bigl(  D_1\, \vert \,  \Omega_1)-3\epsilon \geqslant
            1-4\epsilon\,, 
\end{align*}
for large $n$, 
where the 3rd inequality follows from the law of large numbers, the 4th
inequality follows from the Markov inequality if $c_3$ large enough,
and the penultimate inequality follows from \eqref{eq;main1.5}. Since
we can take $\epsilon$ as small as we wish,  it is enough to show that
\begin{equation} \label{eq;top3.6}
    \left \vert V\left(  
 \frac{c_1}{c_3}  \cdot \frac{  L( (\log n) ^ \gamma ) }{ \overline F(n) (\log n) ^ {\gamma \beta} }
\right)  - V \left(  1/\overline F(n) \right )
\right \vert  = o \left( h\circ V(w_n)  \right) \,.
\end{equation}
To this end, choose any $\alpha> \gamma\beta$ and note that for large $n$,
by parts $(\rom{3})$ and $(\rom{4})$ of Proposition \ref{prop;MTG4},
the expression in the left-hand side does not exceed 
\begin{align*}
 &V \left(  1/\overline F(n) \right )   -   V \left(  (\log
  n)^{-\alpha}/\overline F(n)\right ) \\
    \lesssim &  (\log \log n ) \, h \circ  V \left(  1/\overline F(n) \right)
\ll h \circ V(w_n) \,, 
\end{align*}
as required.   
\end{proof}

\begin{proof}[Proof of Proposition \ref{prop;ELT(2)}]
We start by fixing  a small constant $\rho$ and setting 
\begin{equation}  \label{eq;kn}
    i_ n =  \left \lfloor \frac{ \rho \log w_n } {   \zeta(w_n) }  \right \rfloor.
  \end{equation}
The first step is to establish the following claim, that 
shows that for large $k$,  $M_{n,(k)}(I)$ is not
likely to become the overall maximum $M_n(I)$.   
\begin{equation}   \label{eq;mid1}
    \lim_{i_0 \to \infty, \, K \to \infty} \; 
    \limsup_{n \to \infty}
    \mathbb{P} \left(
    \max _ {  2^{i_0} \leqslant k < 2^{i_n} } 
    M_{n,(k)}(I)
     >  M_{n,K}(I)
    \right ) = 0\,.
\end{equation}

To this end, observe that, by Proposition \ref{prop:new.k}, for any
$ \epsilon \in (0,1)$ we can choose $C_1>0$ large enough so that for
all $K$ large enough, 
$$
\lim_{n \to \infty}  \mathbb{P} \left (  M_{n,K} (I) \geqslant b_n -
  C_1 a_n  \right)  \geqslant 1-\epsilon\,.
$$
Next, for $c>0$ let $A_{c,n}= c   \log n / \overline F(n)$. For  $i_0
\leqslant j \leqslant i_n-1$, let $H_j$  be the event  
$$
   \bigcap_{k=2^j} ^ {2^{j+1}-1} \left\{ 
    M_{n,(k)}(I)\leqslant V \bigl(  2w_n/k  \bigr) + 
    V \left( A_{c,n}  \right) + 2 j h \circ V (A_{c,n})
    \right \} .
$$
We claim that, given $0<\epsilon<1$, we can find $c> 0$
such that 
\begin{equation}  \label{eq;mid1.3}
 \lim_{i_0 \to \infty} \, \liminf_{n \to \infty}  \,   \mathbb{P}\left(  \bigcap_{j=i_0 }
    ^{ i_n - 1 } H_j 
    \right)  \geqslant 1 - \epsilon.
\end{equation}
Assuming, for a moment, that this is true, the claim \eqref{eq;mid1}
will follow once we check that for all $j$ and $n$ large enough, 
\begin{equation} \label{eq;mid1.4}
   V\left( w_n \right) - V \bigl(w_n/2^{j-1}  \bigr) - C_1 a_n >
   V(A_{c,n}) + 2 j h\circ V(A_{c,n}) - V(1/ \overline F(n) ) . 
\end{equation}
Indeed, by \eqref{e:small.diff} and part $(\rom{5})$ of Proposition
\ref{prop;MTG4}, 
\begin{align*}
    &  V ( w_n ) -  V \bigl(w_n/2^{j-1}  \bigr)  - C_1 a_n \\
    = & G ( w_n ) - G \bigl(w_n/2^{j-1}  \bigr)   - ( C_1 +
        o(1) ) h \circ G(w_n)     \\
  \gtrsim & \, j  h \circ G(w_n)\,,
\end{align*}
while by part $(\rom{4})$ of Proposition
\ref{prop;MTG4}, 
\begin{align*}
    & V(A_{c,n}) + 2 j h \circ V(A_{c,n}) - V(1 / \overline F(n) ) \\
    = & G(A_{c,n}) + 2 j h \circ G(A_{c,n}) - G(1 / \overline F(n) ) + o ( j 
        h \circ G(A_{c,n}) )   \\
     \lesssim & \, (j + \log \log n) h \circ V(A_{c,n})\,.
\end{align*}
By part $(\rom{3})$ of Proposition \ref{prop;MTG4} this gives 
 (\ref{eq;mid1.4}), and, hence, (\ref{eq;mid1}), so we now prove
 (\ref{eq;mid1.3}). Switching to the complements, we will show that 
\begin{equation}  \label{eq;mid2.1}
  \lim_{i_0 \to \infty} \,  \limsup_{n \to \infty}  \,  \mathbb{P}\left(  
    \bigcup_{j=i_0} ^ {i_n -1} H_j ^ c 
    \right ) \leqslant \epsilon \,.
\end{equation}
Recall that 
$$    M_{n,(k)} \leqslant 
    V\bigl( w_n/\Gamma_k  \bigr ) + S_{n,(k)} , \quad
    S_{n,(k)} := \max_{t\in I_{k,n} } \sum_{j=k+1} ^ \infty
    V\bigl( w_n/\Gamma_j \bigr) 1_{\{ t \in I_{j,n}  \} } .
$$
Therefore, for each $i_0 \leqslant j < i_n$,
$
H_j ^ c \subseteq \bigcup_{k=2^j} ^ {2^{j+1} -1 }
\left(  U_k  \cup D_k  \cup L_k    \right )
$
with 
\begin{align*}
    &U_k  = \left\{ \Gamma_k \leqslant k/2   \right \},  \\
    &D_k = \left\{ \# I_{k,n} > A_{c,n}  \right \} ,\\
    & L_k = \left \{
    S_{n,(k)} \geqslant  V(A_{c,n}) + 2 j h \circ V(A_{c,n}),\, 
     \# I_{k,n} \leqslant A_{c,n} 
    \right \} . 
\end{align*}
Trivially, 
\begin{equation} \label{eq;mid2.2}
    \sum_{k=1}^\infty \mathbb{P}(U_k) < \infty\,,
\end{equation}
and, if $c$ is large enough, then  by   (\ref{eq;obsB1.3}) we also
have 
\begin{equation}  \label{eq;mid2.3}
   \sum_{k=1}^\infty \mathbb{P}(D_k) < \infty\,.
\end{equation}
Next,  as $\mathbb{P}(X_{0}^{(1)} > x) \sim \overline{\nu}(x)$ by
 subexponentiality  
and $(A_{c,n})^{-1} \asymp \overline{\nu}(V(A_{c,n}))$, we have for
$2^j\leqslant k<2^{j+1}$, 
\begin{align*}
    \mathbb{P}(L_k) \leqslant & A_{c,n} \cdot  \mathbb{P} 
    \left(  X_{0}^{(1)}
    \geqslant 
     V(A_{c,n}) + 2 j h \circ V(A_{c,n})
    \right )   \\
    \lesssim & 
    \frac{ \overline{\nu} \left(     
     V(A_{c,n}) + 2 j h\circ V(A_{c,n})
    \right ) }
    {\overline{\nu} \left(
    V(A_{c,n})
               \right )}\\
  \lesssim & \exp \left\{ - 
\int_0 ^ {2j} \frac{ h\circ V(A_{c,n})  }
{  h\bigl[  V(A_{c,n})  + u h\circ V(A_{c,n} )   \bigr]     }
             du \right \} \\
   \lesssim & \exp \left\{ -\frac{ 2jh\circ V(A_{c,n})  }
{  h\bigl[  V(A_{c,n})  + 2(i_n-1) h\circ V(A_{c,n} )   \bigr]     }
           \right \}\,.
\end{align*}
Note that by Assumption \ref{ass;nu} $(B1)$, for some constant $C$,
for large $n$,
\begin{align*}
     2 (i_n -1) h \circ V(A_{c,n}) \sim &  2 i_n  h\circ G(A_{c,n})      \\
    \sim & \frac{2 \rho \log w_n}{\zeta(w_n)} \cdot 
    \frac{ \zeta(A_{c,n}) }{ \log A_{c,n} }  V (A_{c,n})   
    \leqslant C\rho V(A_{c,n})\,, 
\end{align*}
so we can choose $\rho$ small enough so that
\begin{align*}
 \mathbb{P}(L_k) \leqslant \exp \left\{ -2j\frac{h\circ
  V(A_{c,n})}{h\bigl[ (1+C\rho) V(A_{c,n} )   \bigr]}\right\}\leqslant e^{-j}
\end{align*}
because $h$ is assumed to satisfy \eqref{e:quasi.l1}. It  follows that 
$$
\sum_{k=1}^\infty \mathbb{P}(L_k) < \infty
$$
which, together with (\ref{eq;mid2.2}) and (\ref{eq;mid2.3}), proves
(\ref{eq;mid2.1}), so we have established \eqref{eq;mid1}. Now  the
claim of Proposition \ref{prop;ELT(2)} will follow from the following
statement that we prove next. 

We claim that, with $i_n$ given, once again, by \eqref{eq;kn}, 
$$
  \lim_{K\to\infty} \;  \limsup_{n\to \infty}
   \mathbb{P} \left( 
    \max_{ k \geqslant 2^{i_n} }  M_{n,(k)}   <  M_{n,K}(I)
    \right ) = 1 \,.
$$
Since  $ b_n \sim G(w_n)$ and $a_n = o(b_n)$, by \eqref{e:up.toK} it is enough to show
that for some $\eta \in (0,1)$, 
\begin{equation} \label{eq;btm1.1}
    \lim_{n \to \infty} \mathbb{P} \left(
     \max_{ k \geqslant 2^{i_n} }  M_{n,(k)}   \leqslant \eta G(w_n)
    \right ) = 1\,.
\end{equation}

To this end, choose $0<r<(1-2\beta)/2$ and write 
\begin{align*}
    \max_{ k \geqslant 2^{i_n} } M_{n,(k)} & \leqslant 
    \max_{0 \leqslant t \leqslant n}  \sum_{j=2^{i_n}  } ^ { \lfloor n^{r}  \rfloor } 
    V \bigl(  w_n/\Gamma_j\bigr)
    \one_{ \{ t \in I_{j,n} \} } \\
    & + 
    \max_{0 \leqslant t \leqslant n}  \sum_{j=\lfloor n^{r}  \rfloor +1 } ^ \infty 
       V \bigl(  w_n/\Gamma_j\bigr)
    \one_{ \{ t \in I_{j,n} \} } 
     = : T_{1,n} + T_{2,n}\,.
\end{align*}
By the choice of $r$, 
$$
     \mathbb{P}\left( \max_{0\leqslant t \leqslant n} \sum_{j=2^{i_n}  } ^ { \lfloor n^{r}  \rfloor } 
    \one_{ \{ t \in I_{j,n} \} }  \geqslant 2  \right) 
    \leqslant  n \mathbb{P} \left(  
    \sum_{j=1  } ^ { \lfloor n^r  \rfloor } 
    \one_{ \{ 0 \in I_{j,n} \} }  \geqslant 2 \right ) 
    \lesssim  \frac{n^{2r+1}}{w_n^2} \to 0.
$$
Therefore, with probability increasing   to 1, 
$$
T_{1,n} \leqslant V  \bigl(   w_n/\Gamma_{2^{i_n}}   \bigr)   \lesssim
G \bigl(  w_n/  2^{i_n-1} \bigr)\,.
$$
By Assumption \ref{ass;nu}  $(B4)$,  
$$
 \limsup_{n \to \infty}   \frac{G(w_n / 2^{i_n-1} )}{G(w_n)} 
    <1\,, 
$$
so  (\ref{eq;btm1.1}) will be established once we prove that for any
$\epsilon>0$, 
$$
  \lim_{n \to \infty} \mathbb{P} \left(
  T_{2.n} > \epsilon G(w_n)
  \right ) = 0  \,.
$$
The latter statement will follow from the following claim:
$$
     \lim_{n \to \infty}  n \cdot \mathbb{P} \left (
    \sum_{\Gamma_j > n^r }  V \bigl(  w_n/\Gamma_j   \bigr )
    \one_{\{ 0 \in I_{j,n} \} } > \epsilon G(w_n)
    \right ) = 0\,.
 $$
  Since for some $s>0$ $V(x)\leq G(sx)$ for all $x$, we will prove
  instead that
  \begin{equation} \label{eq;btm1.4}
     \lim_{n \to \infty}  n \cdot \mathbb{P} \left (
    \sum_{\Gamma_j > n^r }  G \bigl( s w_n/\Gamma_j   \bigr )
    \one_{\{ 0 \in I_{j,n} \} } > \epsilon G(w_n)
    \right ) = 0\,.
  \end{equation}

To this end, denote for  $n\in \mathbb{N}$, 
\begin{align*}
    &\widetilde{x}_n = \epsilon G(w_n), \quad x_n = G(sw_n /n^{r}),
      \quad 
     m_0 =  \lfloor   \widetilde{x}_n / x_n \rfloor ,  
\end{align*}
and define $H_n:  (x_0,\infty)^n \to \mathbb{R}$ by 
$$
H_n (z_1, \ldots, z_n) = \int_{x_0} ^ {z_1} \frac{du}{h(u)} +\cdots +\int_{x_0} ^ {z_n} \frac{du}{h(u)} 
:= q(z_1) + \cdots + q(z_n)\,.
$$
Let $N_n$ be a 
Poisson random variable  with mean $ s(1 - n^rs^{-1}w_n^{-1})$
(positive for large $n$). If  $\{\xi_i\}_{i=1}^\infty$ is a 
family of i.i.d. random variables independent of $N_n$ with
distribution equal to normalized $H_{\#}$ restricted to the interval
$(x_0, x_n)$.  Then 
\begin{equation}  \label{eq;btm2.1}
  \sum_{\Gamma_j > n^r }  G \bigl(s w_n/\Gamma_j   \bigr)
    \one_{\{ 0 \in I_{j,n} \} }  
    \stackrel{d}{=}
    \sum_{i=1}^{N_n} \xi_i\,,
\end{equation}
so that 
\begin{align} \notag
    & \mathbb{P} \left (
    \sum_{\Gamma_j > n^r }  G\bigl( sw_n/\Gamma_j   \bigr)
    \one_{\{ 0 \in I_{j,n} \} } > \widetilde{x}_n
    \right ) \\
    & =  \sum_{d=1} ^ \infty 
    \mathbb{P} \left( N_n = m_0+d   \right )
        \mathbb{P} \left(  \sum_{i=1}^{m_0+d} \xi_i
        >  \widetilde{x}_n \right )  
         = : \sum_{d=1}^\infty B_d \cdot Q_d\,.\label{e:last.sum}
\end{align}
Clearly, 
\begin{equation}  \label{eq;btm2.2}
    B_d \leqslant  s^{m_0+d}/(m_0+d)!\,.
\end{equation}
On the other hand, for some constant $c>0$, 
\begin{align*}
 Q_d =& \int_{(x_0 ,x_n)^{m_0+d}}
      \one_{ \{ \sum_{i=1}^{m_0+d} z_i > \widetilde{x}_n  \}  }
        \prod_{i=1}^{m_0+d} P_{\xi_i} (d z_i ) \\
    \leqslant  &c^{m_0+d} \int_{(x_0 ,x_n)^{m_0+d}}
      \one_{ \{ \sum_{i=1}^{m_0+d} z_i > \widetilde{x}_n  \}  }
        \prod_{i=1}^{m_0+d} H_{\#} (d z_i) \\
      = &c^{m_0+d}\int_{(x_0 ,x_n)^{m_0+d}}
      \one_{ \{ \sum_{i=1}^{m_0+d} z_i > \widetilde{x}_n  \}  }
        \prod_{i=1}^{m_0+d}  \exp \{ - q(z_i) \} q^\prime(z_i)\, d z_i \\
      \leqslant  & \left( c q(x_n)  \right)^{m_0+d}  
     \exp \left ( -  \inf \left\{
    \sum_{i=1}^{m_0+d} q(z_i)       : 
    \sum_{i=1}^{m_0+d} z_i > \widetilde{x}_n,  x_0 < z_i <x_n
                   \right \} \right ) \,.
\end{align*}
To evaluate the infimum inside the above exponential,  note that that
the function $H_n$ is increasing and concave in all of its
variables. Hence its infimum is achieved at a boundary point which
will have, say, $k_d$ coordinates equal to $x_n$, $m_0+d-k_d-1$
coordinates equal to $x_0$, and a final coordinate that makes the sum
of all coordinates equal to $\widetilde{x}_n$, for the smallest
possible value of $k_d$ that makes it possible. That means that
\begin{equation} \label{e:inf}
\inf\left\{
    \sum_{i=1}^{m_0+d} q(z_i)       : 
    \sum_{i=1}^{m_0+d} z_i > \widetilde{x}_n,  x_0 < z_i <x_n
  \right \} \geq k_dq(x_n)\,.
\end{equation}
Clearly,
\begin{equation} \label{e:k.d}
k_d=\left[\left\lceil
  \frac{\widetilde{x}_n-x_n-(m_0+d-1)x_0}{x_n-x_0}\right\rceil
\right]_+ \geqslant \frac{\widetilde{x}_n-x_n}{2(x_n-x_0)}
\end{equation}
if
\begin{equation} \label{e:small.d}
d\leqslant \frac{(\widetilde{x}_n-x_n)}{2 x_0}-m_0 + 1\,.
\end{equation} 
Notice that by \eqref{eq;btm2.2}, the part of the sum in
\eqref{e:last.sum} corresponding to $d$ outside of the above range
does not exceed, for large $n$, 
$$
\sum_{d\geqslant \widetilde{x}_n/x_0} s^{m_0+d}/(m_0+d)!=o(1/n)
$$
by part $(i)$ of Proposition \ref{prop;MTG4}. On the other hand, for
large $n$, for $d$ in the range \eqref{e:small.d}, $k_d\geq m_0/3$ by
\eqref{e:k.d}. Therefore, the part of the sum in
\eqref{e:last.sum} corresponding to $d$ in the range \eqref{e:small.d}
can be bounded by 
\begin{align*}
      \sum_{d=1}^\infty \frac{\left( cs q(x_n)
  \right)^{m_0+d}}{(m_0+d)!} \exp \left \{  - m_0 q (x_n)/3   \right
  \}  =\exp\bigr\{ -q(x_n)(m_0/3-cs) \bigr\}\,.
\end{align*}
Since $m_0 \gg \log \log n \to \infty$ by part $(ii)$ of Proposition
\ref{prop;MTG4}, and 
  $$
       q(x_n)  =  - \log \overline{H_\#} (G(sw_n/n^r) ) \sim (1-\beta
       - r ) \log n \,, 
       $$
       the part of the sum in
\eqref{e:last.sum} corresponding to $d$  in the range \eqref{e:small.d}
is also $o(1/n)$, proving \eqref{eq;btm1.4} and, hence, completing the
proof of Proposition \ref{prop;ELT(2)}. 
\end{proof}

\begin{proof}[Proof of Theorem \ref{thm;extremal}]
We need to prove that for any fixed $0<T_1<T_2<\infty$, 
$$
    \left(\frac{\max_{s\leq nt}X_s-b_n}{a_n}, \, T_1\leqslant
      t\leqslant T_2\right)\Rightarrow
    \bigl( X_{\rm G}(t^{1-\beta}),\, T_1\leqslant t\leqslant T_2\bigr) 
$$
weakly in the Skorohod $J_1$ topology on $D[T_1,T_2]$, and without
loss of generality we assume that $T_2\leq 1$. 
According to \eqref{e:supm.to.extremal} and Proposition \ref{prop;X2n}, 
 is the same as proving
\begin{equation} \label{eq;main2}
    \left(\frac{ M_n^{(1)}([0,t])-b_n}{a_n}, \, T_1\leqslant
      t\leqslant T_2\right)\Rightarrow
    \bigl( \mathcal{M}([0,t]),  \, T_1\leqslant t\leqslant T_2\bigr) 
  \end{equation}
 in the same space. We construct all the random objects in
 \eqref{eq;main1} on the same probability space as in the proof of
 Theorem \ref{thm;main} and prove a.s convergence in $D[T_1,T_2]$. In
 the course of the proof of the latter theorem we have shown that for
 every $\vep>0$ there is $K\geq 1$ such that
 $$
 \limsup_{n\to\infty} \PP\Bigl[\bigl( M_n^{(1)}([0,t]), \, T_1\leqslant
      t\leqslant T_2\bigr)\not=\bigl( M_{n,K}([0,t]), \, T_1\leqslant
      t\leqslant T_2\bigr)\Bigr]\leq\vep\,.
$$
Since, clearly,
$$
\lim_{K\to\infty}\PP\Bigl[\bigl( \mathcal{M}_K([0,t]),  \,
T_1\leqslant t\leqslant T_2\bigr)  
=\bigl( \mathcal{M}([0,t]),  \, T_1\leqslant t\leqslant T_2\bigr)
\Bigr]=1\,.
$$
\eqref{eq;main2} will follow once we prove that for every $K=1,2,\ldots$
$$
\left(\frac{ M_{n,K}([0,t])-b_n}{a_n}, \, T_1\leqslant
      t\leqslant T_2\right)\rightarrow
    \bigl( \mathcal{M}_K([0,t]),  \, T_1\leqslant t\leqslant T_2\bigr) 
$$
a.s. in $D[T_1,T_2]$ as $n\to\infty$. The stochastic process in the
right hand side  
 may take the value $-\infty$; the probability
of this converges to zero as $K\to\infty$. For nondecreasing functions
the value of $-\infty$ introduces no difficulties in 
the $J_1$ topology if one interpretes $(-\infty)-(-\infty)$ as zero. 
The assumption $0<\beta<1/2$
implies that the stable regenerative sets $(\overline R_j)$ are a.s. disjoint, so
the latter statement will follows from 
$$
\left(\frac{ M_{n,(k)}([0,t])-b_n}{a_n}, \, T_1\leqslant
      t\leqslant T_2\right)\rightarrow
    \bigl( \mathcal{M}_{(k)}([0,t]),  \, T_1\leqslant t\leqslant T_2\bigr) 
$$
a.s. in $D[T_1,T_2]$ for every $k\geqslant 1$ and, as before, it is
enough to consider the case $k=1$. As in the proof of Proposition
\ref{prop:new.k}, we only need to check that
\begin{align} \label{e:e:main3}
&\left(
\frac{ V\left( w_n / \Gamma_1  \right) 
     \one_{\{ I_{1,n} \cap [0,nt]\neq \emptyset  \} }      -  V(w_n) }
     {h\circ V(w_n)}, \, T_1\leqslant   t\leqslant T_2\right) \\
 \rightarrow 
 & \bigl(   \mathcal{M}_{(1)}([0,t]) , \, T_1\leqslant   t\leqslant T_2\bigr)
 \notag 
 \end{align}    
 a.s.. However, it follows from (\ref{eq;main1.5}) that, a.s., 
 $$
 \inf\{I_{1,n}/n\}\to \inf\{\overline R_1\}\,.
 $$
Together with \eqref{e:first.t} this establishes \eqref{e:e:main3}, as
required. 
\end{proof}


\appendix 

\section{Random Walks with Regularly Varying Tails} \label{app;+rwrv}

Among the major goals of this appendix is to prove Theorem
\ref{thm;istP}. We start with recalling certain 
results on the ranges of the random walks from
\cite{barlow:taylor:1992} and   \cite{samorodnitsky:wang:2019}. We 
consider a  random walk $\{S_n\}_{n \geqslant 0}$ with $N_0$-valued
steps $\{\xi _n\}_{n \geqslant 1}$ whose distribution $F$ satisfies Assumption
\ref{ass;YRV0}. Recall the standard notions 
\begin{enumerate}[label=(\alph*)]
    \item the \textit{range} $A = \{S_n : n =0,1,2,\ldots   \}$, 
    \item the \textit{sojourn time} in $F$ up to time $k$,  $T_F(k)  =
      \# \{  0 \leqslant n \leqslant k : S_n \in F  \} $, for
      $F\subset \mathbb{N}_0, \, k \in \mathbb{N}\cup \{\infty\}$.  
\end{enumerate}
The following properties are well-known; see e.g. Appendix A in
\cite{samorodnitsky:wang:2019}.   As $n\to\infty$, 
\begin{align} 
    & \mathbb{E}_0 \, T_{[0,n]} (\infty) \sim \frac{n^ \beta }
    { \Gamma (1+\beta)  \Gamma(1-\beta) L(n) }\,, \label{eq;mtgA2}\\
    &  \mathbb{P}_0 \left(    A \cap \{n\}  \not=\emptyset \right ) \sim 
    \frac{n ^ {\beta-1}\mathbb{P}(\xi_1>0) }{ \Gamma (\beta)
      \Gamma(1-\beta) L(n)   }\,.
\label{eq;mtgA3}
\end{align}

For  $F\subset \mathbb{Z}$ we denote by $D(F):= \{ x-y: x, y \in F \}$
its  \textit{difference set}. For every $\delta \in (0,1)$, there
exists  $c_0  =c_0(\delta)>0$ such that for every $F$ and every $k \in
\mathbb{N}\cup \{\infty\}$ with   $0<\mathbb{E}_0 ( T_{D(F)} (k)
)<\infty$, we have 
\begin{equation} \label{eq;sjt1}
    \mathbb{P}_x
    \left( 
    T_F (k) \geqslant c \, \mathbb{E}_0 ( T_{D(F)} (k) )
    \right) \leqslant e^{-c \delta} \ \  \text{for each } \ 
    x \in F \ \text{and} \  c > c_0\,,
\end{equation}
see e.g.  Lemma 3.1 in \cite{pruitt:taylor:1969}. Choosing, in
particular, $F=F_n =\{0,1,\ldots,n\}$, we have by \eqref{eq;mtgA2}, 
$$
\mathbb{E}_0 T_{D(F_n)}(\infty) = \mathbb{E}_0 T_{F_n}(\infty) \lesssim \frac{n^\beta}{L(n)}.
$$
Therefor, choosing in \eqref{eq;sjt1} $\delta=1/2$, we see that for any
$C>0$ we  can choose $c>0$ so that for all $n \in
\mathbb{N}$,  
\begin{equation} \label{e:sjt2.1}
    \mathbb{P}_0 \left( \# (A \cap [0,n] ) \geqslant 
     \frac{c n^ \beta
       \log n }{ L(n)}   \right) \leqslant
   \mathbb{P}_0 \left(  T_{F_n}(\infty) \geqslant 
     \frac{c n^ \beta
       \log n }{ L(n)}   \right)   
 \leqslant n^{-C}. 
 \end{equation}
Furthermore, for a sufficiently large $c$, for each $n$, we have simultaneously for all $k=0,1,\ldots,n$ that 
  \begin{equation}  \label{eq;sjt2.2}
       \mathbb{P}_0 \left( \# (A \cap [0,2^k ) ) \geqslant 
     \frac{c n 2^ {\beta k}
     }{ L(2^k)}   \right)  \leqslant e^{-n}\,.
   \end{equation}
   
\begin{lemma}  \label{thm;range2}
Assume that  Assumption \ref{ass;YRV0} holds and $S_0=0$.  Then 
\begin{equation}  \label{eq;range2}
   \limsup_{n_0 \to \infty} \; \sup_{n>n_0}\; \max_{0 \leqslant k \leqslant n-1}
  \;  \max_{m\in \bbz}
  \frac{\#  ( A \cap [m,m+2^k) \cap [2^{n_0} , 2^n  ) ) }{ n 2^{\beta k} / L(2^k) }
  < \infty\quad \text{a.s.}    
\end{equation}
\end{lemma}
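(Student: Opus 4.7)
The plan is to combine a dyadic covering, the strong Markov property at the first entry into each dyadic interval, and a Borel--Cantelli argument. First, observe that any interval $[m,m+2^k)$ is contained in the union of at most two consecutive dyadic intervals $I_{j,k}:=[j2^k,(j+1)2^k)$, so
\[
\max_{m\in\bbz}\#\bigl(A\cap[m,m+2^k)\bigr)\leqslant 2\max_{j\in\bbz}\#\bigl(A\cap I_{j,k}\bigr).
\]
Because only dyadic intervals meeting $[2^{n_0},2^n)$ contribute to \eqref{eq;range2}, for each fixed $k$ there are at most $2^{n-k}+1$ relevant values of $j$, giving a total of at most $\sum_{k=0}^{n-1}(2^{n-k}+1)\leqslant 2^{n+2}$ relevant pairs $(k,j)$.

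I would then control $\#(A\cap I_{j,k})$ uniformly in $j$ using the strong Markov property. Because the steps lie in $\mathbb{N}_0$, the first entry time $\tau_{j,k}$ into $I_{j,k}$, when finite, is a stopping time with $S_{\tau_{j,k}}\in I_{j,k}$, and the walk visits $I_{j,k}$ only at times $\geqslant\tau_{j,k}$. The translated walk $(S_{\tau_{j,k}+l}-S_{\tau_{j,k}})_{l\geqslant 0}$ is an independent copy of $(S_l)_{l\geqslant 0}$ starting at $0$, so $\#(A\cap I_{j,k})$ is stochastically dominated by $\#(\tilde A\cap[0,2^k))$, where $\tilde A$ is the range of an independent walk from $0$. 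Hence \eqref{eq;sjt2.2} applies to $\#(A\cap I_{j,k})$ with the same constant, uniformly in $j$.

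The main delicate point is whether the $e^{-n}$ decay in \eqref{eq;sjt2.2} is fast enough to overcome the $O(2^n)$ factor produced by the union bound. It is, because $2/e<1$: a union bound over the $\leqslant 2^{n+2}$ relevant pairs yields
\[
\mathbb{P}\!\left(\max_{0\leqslant k\leqslant n-1}\max_{m\in\bbz}\frac{\#\bigl(A\cap[m,m+2^k)\cap[2^{n_0},2^n)\bigr)}{n\,2^{\beta k}/L(2^k)}\geqslant 2c\right)\leqslant 2^{n+2}e^{-n}=4(2/e)^n,
\]
which is summable in $n$. Borel--Cantelli then shows that the event above occurs for only finitely many $n$ almost surely. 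Picking $n_0$ larger than the last such $n$ (which depends on $\omega$) and noting that the intersection with $[2^{n_0},2^n)$ only shrinks the numerator yields $\sup_{n>n_0}(\cdot)\leqslant 2c$ a.s., which is exactly \eqref{eq;range2}. Should the margin $2/e<1$ turn out to be too tight for any subsequent refinement, one can alternatively enlarge the constant $c$ in \eqref{eq;sjt1} (keeping $\delta=1/2$) to replace $e^{-n}$ by $e^{-Cn}$ for any prescribed $C$, which is where the flexibility of the Pruitt--Taylor bound would be exploited.
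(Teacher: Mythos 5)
Your proof is correct and follows essentially the same route as the paper's: a union bound over the dyadic scales and positions inside $[2^{n_0},2^n)$, the strong Markov property at the first entry into each interval to reduce to the walk started at $0$, the bound \eqref{eq;sjt2.2} giving decay $e^{-n}$, and Borel--Cantelli using $2/e<1$. The only cosmetic difference is that you cover $[m,m+2^k)$ by at most two dyadic intervals $[j2^k,(j+1)2^k)$ to reduce the count to $O(2^n)$, whereas the paper bounds the number of subintervals directly by $n2^n$; both factors are defeated by the $e^{-n}$ decay in the same way.
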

\begin{proof}

Let $c$ be such that \eqref{eq;sjt2.2} holds. Then 
  \begin{align*}
&\PP\left( \sup_{n>n_0}\max_{0 \leqslant k \leqslant n-1} \max_{m\in \bbz}
  \frac{\#  ( A \cap [m,m+2^k) \cap [2^{n_0} , 2^n  ) ) }{ n 2^{\beta k} /
                   L(2^k) }\geq c\right) \\
  \leq & \sum_{n>n_0}   n 2^n \cdot   \max_{
 \begin{subarray}{c}
0 \leqslant k \leqslant n-1 \\
[m,  m+2^k) \subset [2^{n_0}, 2^n ) 
\end{subarray}
 }  \,
\PP\left( \#(A \cap [m,m+2^k) \cap [2^{n_0} , 2^n  ))\geq
                  \frac{cn2^{\beta k}}{ L(2^k) }\right)    \\
 \leq & \sum_{n>n_0} n 2^{n}  \, \max_{0 \leqslant k \leqslant n-1}  \PP\left(\# ( A \cap [0,2^k))  \geqslant \frac{c
                  n 2 ^{\beta k}}{L(2^k)} \right) \leq \sum_{n>n_0} n (2/e)^n\,,
  \end{align*}
  where the 1st inequality follows by the union bound since 
  $$n 2^n \geqslant \# \{  [m,m+2^k) : [m,m+2^k) \subset [2^{n_0},2^n) , k=0,\ldots, n-1.  \}, $$
  the 2nd inequality follows from the strong Markov
 property, and the last one follows from  \eqref{eq;sjt2.2}. Since the last expression is summable in $n_0$,
 (\ref{eq;range2}) follows by the first Borel-Cantelli Lemma. 

\end{proof}

\begin{lemma}  \label{lem;sparse1}
Assume that  Assumption \ref{ass;YRV0} holds and $S_0=0$. For any 
$\eta, \gamma > 0$, 
\begin{equation} \label{eq;sparse1}
    \#  \left \{k: S_k \leqslant \eta n, \,\xi_k \geqslant (\log n)^\gamma   \right \}  \gtrsim_P 
     \frac{n^\beta  L\left(  (\log n)^\gamma  \right) }{   ( \log n
       )^{\gamma \beta} L(n) }\,. 
\end{equation} 
\end{lemma}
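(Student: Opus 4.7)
My plan is to select a length scale $N_n=\lfloor c\,n^\beta/L(n)\rfloor$ with a small constant $c>0$, and to combine two complementary observations: (i) by time $N_n$ the walk has, with probability close to one, not yet exited $[0,\eta n]$; and (ii) among the first $N_n$ steps, the number exceeding $(\log n)^\gamma$ is already of the order claimed. Because the steps $\xi_k$ are $\mathbb{N}_0$-valued, the walk is nondecreasing, so on the event in (i) every large step with index $k\le N_n$ automatically satisfies $S_k\le\eta n$ and thus contributes to the count in \eqref{eq;sparse1}.

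For (i), I would invoke the classical stable CLT for i.i.d.\ sums with tail in $\mathrm{RV}_{-\beta}$, $\beta\in(0,1)$: namely, $S_n/a_n\Rightarrow W_\beta$, where $W_\beta$ is a positive $\beta$-stable random variable and $a_n=(1/\overline{F})^{\leftarrow}(n)$. Regular variation gives $a_{N_n}\sim c^{1/\beta}n$ up to a slowly varying correction, so $\eta n/a_{N_n}\to\eta c^{-1/\beta}$, which can be made arbitrarily large by taking $c$ small. Consequently $\mathbb{P}(S_{N_n}\le\eta n)\ge 1-\epsilon$ once $c=c(\epsilon,\eta)$ is sufficiently small.

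For (ii), let $p_n=\mathbb{P}(\xi_1\ge(\log n)^\gamma)\sim(\log n)^{-\gamma\beta}L((\log n)^\gamma)$ (by Assumption \ref{ass;YRV0}), and let $B_n=\#\{1\le k\le N_n:\xi_k\ge(\log n)^\gamma\}\sim\mathrm{Bin}(N_n,p_n)$. Its mean $\mu_n=N_n p_n$ is of the exact order $n^\beta L((\log n)^\gamma)/((\log n)^{\gamma\beta}L(n))$ appearing in \eqref{eq;sparse1}, and it diverges because $n^\beta/L(n)\to\infty$ dominates any polylogarithmic factor. A Chebyshev estimate (using $\mathrm{Var}\,B_n\le\mu_n$) then yields $\mathbb{P}(B_n\ge\mu_n/2)\to 1$.

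A union bound on (i) and (ii) produces, with probability $\ge 1-\epsilon-o(1)$, both $S_{N_n}\le\eta n$ and $B_n\ge\mu_n/2$; on their intersection the count in \eqref{eq;sparse1} is bounded below by $B_n\ge\mu_n/2$, which is of the claimed order, yielding the tightness statement since $\epsilon>0$ is arbitrary. The main (modest) obstacle is step (i); if one prefers to avoid the stable CLT, one can instead combine the first-moment estimate $\mathbb{E}\,T_{[0,\eta n]}(\infty)\asymp n^\beta/L(n)$ from \eqref{eq;mtgA2} with a Paley--Zygmund second-moment bound on the sojourn time to produce a matching lower bound in probability.
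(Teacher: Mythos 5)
Your proof follows essentially the same route as the paper: the paper's step (i) uses the Mittag--Leffler limit for the renewal counting process $N_t$ (Feller XI.5), which is equivalent by inversion to the stable CLT you invoke for $S_{N_n}$, and your step (ii) is exactly the paper's Chebyshev bound on the event $D_n$. One small clean-up: the slowly varying factors cancel exactly, giving $a_{N_n}\sim c^{1/\beta}n$ with \emph{no} residual slowly varying correction, which is precisely what makes $\eta n/a_{N_n}\to\eta c^{-1/\beta}$ a genuine numerical limit rather than one still carrying slowly varying noise.
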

\begin{proof}
Let 
$N_t = \max \{ k: S_k \leqslant t   \} + 1, \, t\geq
0$. Then for each $x>0$, as $m\to\infty$, 
$$
    \mathbb{P} \left(
    \overline{F}(m) N_{m}  \geqslant  x ^ {-\beta}
    \right ) \rightarrow   J_\beta (x), 
$$
where $J_\beta$ is an $\bbr_+$-supported strictly $\beta$-stable distribution;
see e.g. \cite{feller:1966} \Rom{11}.5 (5.6). Therefore, for any
$\epsilon>0$ we can choose $c>0$ so small that with 
$m_n= \lceil cn^\beta / L(n)  \rceil$ we have $\liminf_{n \to \infty}
P(B_n) > 1 - \epsilon $ for the events   $B_n=\{  N_{\eta n} \geqslant
m_n \} $, $n\geq 1$. Consider also the events 
\begin{align*}
D_n  = \left \{      
 \frac{1}{m_n}\sum_{k=1} ^ {m_n} 1_{\{ \xi_k > (\log n ) ^ \gamma  \}}
                 <  \frac{  \overline{F}\left(  (\log n) ^\gamma
                 \right) } { 2 } \right \}, \ n=1,2,\ldots\,.
\end{align*}
By Chebyshev's inequality, as $n\to\infty$, 
$$
\mathbb{P}(D_n) \lesssim  \frac{ m_n \cdot  \text{var}( 1_{\{ \xi_k > (\log n ) ^ \gamma  \}} )   } 
{ \left( m_n  \overline{F} \left(  (\log n) ^\gamma \right)  \right) ^ 2    }  \lesssim 
\frac{L(n)}{n^\beta} \cdot
\frac{ (\log n)^{\gamma \beta} }{L \left((\log n)^ \gamma  \right ) }
\to 0\,.
$$
Hence  $\liminf_{n\to \infty} \mathbb{P}(B_n \cap D_n^c) \geqslant 1-
\epsilon$. However, on  the event $B_n \cap D_n^c$
$$
 \#  \left \{k: S_k \leqslant \eta n,\,  \xi_k \geqslant (\log n)^\gamma   \right \}  \geqslant 
 \frac{c}{2} \frac{n^\beta  L\left(  (\log n)^\gamma  \right) }{   (
   \log n )^{\gamma \beta} L(n) }\,,  
$$
leading to the desired conclusion. 
\end{proof}

\noindent 
{\it Proof of Theorem \ref{thm;istP}}  $(\rom{1})$. 
We use the notation $T=(a,b)$ throughout the proof.  Let
$\{Y^{(1)}\}_{t \in \mathbb{Z}}$ and $\{Y^{(2)}\}_{t \in \mathbb{Z}}$ be 
i.i.d. Markov chains  on $\mathbb{Z}$ starting at $0$, 
satisfying  Assumption \ref{ass;YRV0}. The simultaneous visit times of
the two chains to $0$,
$$
\varphi_j ^ \ast = \inf \{ n \geqslant \varphi_{j-1}+1 : Y^{(1)}_n =
Y^{(2)}_n = 0 \}, \ j=1,2,\ldots  
$$
with $\varphi_0^ \ast=0$ and $\varphi_j^ \ast=\infty$ if
$\varphi_{j-1}^ \ast=\infty$, form a terminating (since $\beta<1/2$)
renewal process. We denote by $\overline{F^\ast}$ the tail
distribution of $\varphi^\ast:= \varphi^\ast_1$. 
By the last entrance decomposition,  
\begin{align*}
  p_{n,T} &= \sum_{k=\lceil na \rceil} ^ { \lfloor nb \rfloor }
            \mathbb{P}\left( Y^{(1)}_k=
Y^{(2)}_k = 0,  \ \text{no simultaneous returns after $k$ in
            $nT$}\right) \\
  &= \sum_{k=\lceil na \rceil} ^ { \lfloor nb \rfloor } \frac{1}{w_n^2}  \overline{F^\ast}(\lfloor nb
    \rfloor -k)  \asymp \frac{n}{w_n^2} 
\end{align*}
since $\overline{F^\ast}(\infty)>0$. This proves (\ref{eq;anl1}).

\medskip
 
In the remaining part of the proof we will use the following simple
observation that allows us to use the basic facts about random walks
with regularly varying tails describing earlier in this section. For a
fixed $n \in  \mathbb{N}$ we construct a random walk
$\{S_k^{(n)}\}_{k\geqslant 0}$ by choosing the initial state
distributed as $\min I_{1,n}$ and the steps with the distribution
${F}$ in (\ref{eq;varphi1}). Recall that 
\begin{equation} \label{e:init.n}
\mathbb{P}(\min I_{1,n}\leq nx)=\frac{w_{[nx]}}{w_n} \ \ \text{for
  $0<x<1$.}
\end{equation}
The range $A_n$ of  $\{ S_k^{(n)}\}_{k\geqslant 0}$, obviously,  satisfies
$$
     A_n \cap [0,m] \stackrel{d}{=}  I_{1,n} \cap [0,m] 
     \text{  for all } m\leq n\,.
 $$
By conditioning on $S_0^{(n)}$ and using \eqref{e:sjt2.1}, we see that
for any $C>0$ we can choose $c>0$ so that for all $n \in  \mathbb{N}$, 
  \begin{equation}  \label{eq;obsB1.3}
     \mathbb{P}\left(
     \# I_{1,n} \geqslant \frac{cn^\beta \log n}{L(n)}
     \right) \leqslant n^{-C}\,.
 \end{equation}

\noindent 
{\it Proof of Theorem \ref{thm;istP}}  $(\rom{2})$. 
For $c>0$ denote $a_{c,n}  := c n^\beta \log n / L(n)$ and consider  the  event 
$
B_n =  \left \{ \#( I_{1,n} \cap n T ) \leqslant 
 a_{c,n} \right  \}$.
On $B_n$, the ``union bound method'' shows that for large $n$, 
$$
 \overline{p}_{n,T} \leqslant \frac{\# (I_{1,n} \cap nT) }{w_n }
 \leqslant  \frac{a_{c,n} }{w_n}.
$$
Therefore it suffices to show that $\mathbb{P}(B_n^c) \leqslant n^{-C}$ if
$c$ is large enough. This, however, follows immediately from
(\ref{eq;obsB1.3}). 

\medskip 
\noindent 
{\it Proof of Theorem \ref{thm;istP}}  $(\rom{3})$.
By the measure preserving property of the shift $\theta$ it is enough
to consider intervals of the form $T=(0,b)$. Let $v_1< v_2<v_3<
\cdots$ be the enumeration of the points of $I_{1,n}$ in the increasing
order. We construct a subset of by 
$I_{1,n}$  
\begin{equation} \label{eq;qch2.3}
   I_{1,n,\gamma} : = \{ v_i \in I_{1,n} :  v_{i+1} - v_{i} \geqslant (\log n) ^ \gamma  \}
 \end{equation}
(not including the last point in $I_{1,n}$.)
For an $\omega_1 \in \{ I_{1,n} \cap nT  \neq \emptyset \}$,  a lower
bound for $\overline{p}_{n,T}(\omega_1)$ is derived  below, where for
typographical convenience we use the notation $\mathbb{P}_2$ to denote
the probability measure associated with $Y^{(2,n)}$. 
\begin{align*}
    \overline{p}_{n,T} (\omega_1) 
    & \geqslant \mathbb{P}_2  \left(   
    I_{1,n,\gamma} (\omega_1) \cap I_{2,n} \cap n T \neq \emptyset  
     \right ) \\ 
     & \geqslant \sum_{u \in I_{1,n,\gamma}(\omega_1)\, \cap \, nT }
     \mathbb{P}_2 \left( u = \max ( I_{1,n,\gamma}(\omega_1) \cap  
     I_{2,n} ) \right) \\
     & = \frac{1}{w_n} \sum_{u \in I_{1,n,\gamma}(\omega_1)\, \cap \, nT}
      \mathbb{P}_2\left(   I_{1,n,\gamma}(\omega_1) \cap I_{2,n}  
      \cap (u, \infty) \cap nT  = \emptyset
      \, \big \vert \, u \in I_{2,n}  \right)\,.
\end{align*} 
Now the claim of part (iii) of the theorem follows from the
following two statements.

For any $\epsilon \in (0,1)$, there exists $c=c(\epsilon)>0$ such that
for all $\gamma>0$ 
\begin{equation}   \label{eq;qch2.4}
 \liminf_{n\to \infty} \mathbb{P} \left(  \#(I_{1,n,\gamma} \, \cap \, nT ) \geqslant
    d_n
     \, \big \vert \, 
     I_{1,n} \cap nT \neq \emptyset
     \right ) \geqslant 1 - \epsilon\,,
\end{equation}
where 
$$
d_n  =  \frac{c n^\beta  L\left(  (\log n)^\gamma  \right) }{   ( \log n )^{\gamma \beta} L(n) }\,.
$$
Further we claim that, if $\gamma > (1-2 \beta)^{-1}$, then for every
$0<\epsilon<1$ there is an event $B$ with $\mathbb{P}(B)>1-\epsilon$ such that
for every $w_1\in B$, 
\begin{equation} \label{eq;qch2.5}
\sup_{u \in I_{1,n,\gamma} ( \omega_1 ) \cap
nT}  \mathbb{P}_2\left(   I_{1,n,\gamma}(\omega_1) \cap I_{2,n}  
      \cap (u, \infty) \cap nT  \not= \emptyset
      \, \big \vert \, u \in I_{2,n}  \right)    = o_P(1)\,.
\end{equation} 
These two statements are proved in the remainder of this
section. 

Fix $\epsilon \in (0,1)$. 
For any $\eta \in (0,1)$ we have by \eqref{eq;mtg3.1} and
\eqref{eq;distV}, 
\begin{align*} 
& \mathbb{P} \left( \frac{1}{n} I_{1,n} \cap \eta n T \neq \emptyset   \, \Big \vert \,  \frac{1}{n} I_{1,n} \cap  n T \neq \emptyset \right)   \\
\to  & 
\mathbb{P} 
\left( \overline{R_1} \cap \eta T \neq \emptyset   \, \Big \vert \,  \overline{R_1}  \cap  T \neq \emptyset   \right ) = \eta^{1-\beta}.
\end{align*}
Therefore, if $\eta$ is sufficiently close to 1, 
\begin{equation}  \label{eq;qch3.1}
\liminf_{ n \to \infty  } \, \mathbb{P} \left( \frac{1}{n} I_{1,n} \cap \eta n T \neq \emptyset   \, \Big \vert \,  \frac{1}{n} I_{1,n} \cap  n T \neq \emptyset \right)   \geqslant \sqrt{1-\epsilon} . 
\end{equation}
Note that 
 \begin{align*}
 & \mathbb{P} \left( 
    \#(I_{1,n,\gamma} \, \cap \, nT ) \geqslant
    d_n
 \, \Big \vert \, 
      \frac{1}{n} I_{1,n} \cap \eta n T \neq \emptyset 
\right)  \\
= &  
\sum_{i \in \eta n T } P\bigl( S_0=i\big| S_0 \in \eta n  T\bigr)\, 
\mathbb{P}_i \left(  
\# \{ k: \xi_k \geqslant (\log n) ^ \gamma , S_k \leqslant nb- i  \}   \geqslant d_n
\right )   \\
\geqslant &  \mathbb{P}_0 \left( 
\# \{ k: \xi_k \geqslant (\log n) ^ \gamma , S_k \leqslant \lfloor n(1-\eta)b \rfloor  \}   \geqslant d_n\right)\,.
\end{align*} 

We conclude by Lemma \ref{lem;sparse1} that a fixed $\eta\in(0,1)$ for
which \eqref{eq;qch3.1} holds, we can choose  $c$ such that 
\begin{equation} \label{eq;qch3.2}
    \liminf_{n\to \infty} \mathbb{P} \left( 
     \#(I_{1,n,\gamma} \, \cap \, nT ) \geqslant
    d_n
     \, \big \vert \, 
       I_{1,n} \cap \eta n T \neq \emptyset
    \right) \geqslant \sqrt{1-\epsilon}\,.
\end{equation}
Clearly,  (\ref{eq;qch3.1}) and 
(\ref{eq;qch3.2}) give us  (\ref{eq;qch2.4}), so it remains to prove
\eqref{eq;qch2.5}.  

Let $k_1$ and  $k_2$ be such that 
$ 2^{k_1} \leqslant (\log n ) ^ \gamma < 2^{k_1 +1}$ and $2^{k_2 - 1 }
\leqslant n < 2^{k_2}$.   
Let $u \in I_{1,n, \gamma}(\omega_1)\cap nT $ and denote by 
$\overline{q}_n (u \vert \omega_1)$ the probability in the left hand
side of \eqref{eq;qch2.5}. We have 
\begin{align*}
    &\overline{q}_n (u \vert \omega_1)  \\
     \leqslant &  \sum_{k=k_1} ^ {k_2}  
    \mathbb{P}  \left( 
    s \in I_{2,n} \text{ for some } s \in [u+2^k, u+2^{k+1}) \cap 
    I_{1,n}(\omega_1)  \, \big \vert \, u \in I_{2,n}
    \right )  \\
     \leqslant & \sum_{k=k_1} ^ {k_2}  
    \# \left(  [u+2^k, u+2^{k+1}) \cap 
    I_{1,n}(\omega_1) \right )  \cdot 
    \max_{i\in [u+2^k, u+2^{k+1}) } \mathbb{P} \left(  i \in I_{2,n} \, \vert \, u \in I_{2,n}  \right)  . 
\end{align*}

Fix any $\epsilon\in (0,1)$. By \eqref{e:init.n} and 
Lemma \ref{thm;range2}, there is $C>0$ and an event $B$ with probability
higher than $1-\epsilon$  such that for all $n$ large enough, all $
w_1 \in  B$, all $u \in I_{1,n, \gamma}(\omega_1)\cap nT $ and all
$k\geq k_1$, 
\begin{equation} \label{eq;qch4.1}
     \# \left(  [u+2^k, u+2^{k+1}) \cap 
    I_{1,n}(\omega_1) \right ) \leqslant C
    \log n \frac{ 2^{\beta k}  }{L(2^k) }\,.
\end{equation}
Further, by \eqref{eq;mtgA3}, 
\begin{equation}  \label{eq;qch4.2}
 \sup_{u\geq 0}\, \max_{i\in [u+2^k, u+2^{k+1}) } \mathbb{P} \left(  i \in I_{2,n} \, \vert \, u \in I_{2,n}  \right)  
  \lesssim \frac{ 2^{-(1-\beta) k} } { L(2^k) }\,.
\end{equation}
Combining (\ref{eq;qch4.1}), (\ref{eq;qch4.2}), and Potter's bounds,
we see that for any $w_1\in B$ 
\begin{align*}
   \max_{u \in I_{1,n, \gamma}(\omega_1)\cap nT}  \overline{q}_n(u \vert \omega_1) \lesssim 
    \log n  \sum_{k=k_1} ^ {k_2}  \frac{2^{-(1-2\beta)k}}{( L(2^k) )^2}
  \lesssim   (\log n ) ^ { 1+\alpha \gamma } 
\end{align*}
for any $\alpha >2\beta-1$. By the choice of $\gamma$, we can select
$\alpha$ in such a way that $ 1+\alpha \gamma<0$. This proves
\eqref{eq;qch2.5}.


\section{Calculations for Sections \ref{sec;sidp} and
  \ref{sec;elt}}   \label{app;supp} 
We start by checking that the lognormal-type tails of Example
\ref{eg;lognm1}   satisfy  Assumption \ref{ass;nu}. The fact that
$(\nu(1,\infty))^{-1}\nu(\cdot\cap (1,\infty))$ is a subexponential
distribution follows from Theorem 4.1.17 in
\cite{samorodnitsky:2016}. Next, let 
$$\overline{H_{\#}}(x) = c_1 x^\beta (\log x ) ^ \xi \exp \left(  - 
\lambda (\log x ) ^ \gamma \right )$$
for $x>x_0$ that is large enough so that this function is decreasing
and $c_1$ is such that $\overline{H_{\#}}(x_0) =1$. That is,
\eqref{eq;f2} holds with 
$$
    h (x) =  
\left( \frac{\lambda \gamma (\log x )^{\gamma - 1}}{x} 
 - \frac{\xi}{x \log x}  -  \frac{\beta}{x}  \right ) ^ {-1} \,. 
$$
Regular variation of $h$ is clear, and so are the eventual positivity of
$h^\prime$ and the fact that $\lim_{x \to \infty} h^\prime (x) =0$. In
particular, $H_{\#} $ is in the maximum domain
  of  attraction of the Gumbel distribution.  Next, by the implicit
  function theorem, $G$ is, for large values of the argument, of the
  form \eqref{e:rep.G}. The relation $\overline{H_{\#}}\circ G (x) =
  x^{-1} $ for $x>1/\overline{H_{\#}} (x_0):=x_1$ means, in this case, that 
 $$
    c_1 G(x) ^ \beta ( \log G(x) ) ^ \xi \exp \left( -\lambda (\log G(x)) ^
      \gamma \right ) = x^{-1}\,, 
$$
so $\log G(x) \sim (\log x / \lambda ) ^ { 1/\gamma }$ as
$x\to\infty$. Denoting $g(x)=G^\prime(x)$ we also have, for $x>x_1$, 
$$
\beta \frac{g(x)}{G(x)} + \xi \frac{g(x) }{G(x) \log G(x)}  - \lambda
\gamma \frac{(\log G(x))^{\gamma-1} g(x) }{G(x)}  = -  \frac{1}{x} \,,
$$
so that, as $x\to\infty$, 
\begin{equation} \label{eq;lognm1.5}
   \zeta (x) =\frac{g(x)}{G(x)}x\log x 
\sim \gamma ^{-1}  \lambda ^ { -  1/\gamma } (\log x)^{ 1/\gamma } \,.
\end{equation}
The assumptions $(B1)$-$(B4)$ follow from (\ref{eq;lognm1.5}).

Next we check that the super-lognormal-type tails of Example
\ref{eg;suplognm1} satisfy  Assumption \ref{ass;nu}. Once again, the
fact that
$(\nu(1,\infty))^{-1}\nu(\cdot\cap (1,\infty))$ is a subexponential
distribution follows from Theorem 4.1.17 in
\cite{samorodnitsky:2016}. Now we set
$$\overline{H_{\#}}(x) = c_1 x^\beta (\log x ) ^ \xi \exp \left(  
\lambda (\log x ) ^ \gamma \right )
\exp \left(   - \rho \exp \left(  
\mu (\log x ) ^ \alpha \right ) \right )  $$
for or $x>x_0$ and appropriate $x_0,c_1$, and \eqref{eq;f2} holds with 
 $$
 h(x) =  
\left(  \frac{\rho \alpha \mu (\log x)^{\alpha-1} \exp \left(     \mu (\log x) ^ \alpha \right ) }{x}  -  
\frac{\lambda \gamma (\log x) ^{\gamma - 1}}{x}  - \frac{\xi}{x \log x}  - \frac{\beta}{x} \right ) ^ {-1} .    
$$
All of the arguments we used in the previous example still work. In
this case we have
$$
\exp \left(  
\mu (\log G(x) ) ^ \alpha \right ) \sim \log x/\rho \ \ \text{as
$x\to\infty$,}
$$
so also 
$\log G(x)  \sim  (  \log \log x/\mu ) ^ {1/\alpha}$
as $x\to\infty$. Since 
\begin{align*}
\beta \frac{g(x)}{G(x)} +& \xi \frac{g(x) }{G(x) \log G(x)}  + \lambda
  \gamma \frac{(\log G(x))^{\gamma-1} g(x) }{G(x)} \\
 -&\rho\mu\alpha \exp \left(  
\mu (\log x ) ^ \alpha \right ) \frac{(\log G(x))^{\alpha-1} g(x) }{G(x)}
= -  \frac{1}{x} \,,
\end{align*}
we conclude that     
\begin{equation}
    \zeta(x) =\frac{g(x)}{G(x)}x\log x 
\sim \alpha^{-1} \mu ^ { - 1/\alpha } \left( 
\log \log x 
\right ) ^ {(1-\alpha)/\alpha } \ \ \text{as $x\to\infty$.}
    \label{eq;suplognm1.4}      
  \end{equation}
  As before, the assumptions $(B1)$-$(B4)$ follow from
  (\ref{eq;suplognm1.4}). 

 \begin{proof}[Proof of Proposition \ref{prop;MTG4}]
\phantom{blank}

$(\rom{1})$ and $(\rom{2})$ follow  by direct integration. 
To show $(\rom{3})$, we note that the derivative $g$ of $G$ satisfies
$h\circ G(x)=xg(x)$ for all large $x$. Therefore, for large $x$, 
\begin{align*}
    & \frac{h\circ G(H_1 (x) )}
{h\circ G(H_2 (x)) }   
=  \frac{G(H_1 (x) )}{G(H_2 (x))} \cdot 
\frac{ \zeta(H_1 (x)) }
{\zeta(H_2 (x))  } 
\cdot 
\frac{ \log ( H_2(x) ) }
{\log ( H_1(x) )}    \\
\asymp  &  \frac{G(H_1 (x) )}{G(H_2 (x))}  \cdot \frac{ \zeta(H_1 (x)) }
{\zeta(H_2 (x))  } \gg  \exp \left\{ b ( \log \log x  ) ^ \delta  
\right \}
\end{align*}
by Assumption \ref{ass;nu} $(B2)$, Potter's bounds  and direct integration. 

For part $(\rom{4})$, we only consider the case 
$\alpha>0$. When $\alpha <0$, a similar argument works. Write 
$$
    G \left (x  (\log x) ^ \alpha  \right ) - G(x) 
   =
   \int_1 ^ {( \log x ) ^ \alpha } \frac{G(ux) \zeta( ux )  }{u \log(ux)} du \,.
$$
Dividing this identity by $h \circ G(x) = G(x) \zeta(x) / \log x $
gives  us 
\begin{equation}  \label{eq;C2.1}
    \frac{    G \left (x  (\log x) ^ \alpha  \right ) - G(x)  }
    { h \circ G(x)  }  
    =    \int_1 ^ {( \log x ) ^ \alpha} \frac{G(ux)}{G(x)} \cdot 
    \frac{\zeta(ux)}{\zeta(x)} \cdot 
    \frac{\log x}{\log (ux)}  \cdot
    \frac{du}{u} \,.
\end{equation}
Denote $I=[1 , (\log x)^\alpha ]$. Clearly, $ \log x \sim \log (ux)$ 
uniformly over $u\in I$. Further,  by Assumption \ref{ass;nu} $(B1),
(B3)$, we see that   $\zeta(x) \asymp \zeta (u x) $ uniformly over $u\in I$. 
Finally, for $u\in I$, by Assumption \ref{ass;nu} $(B2)$, 
\begin{align*}
1&\leq    \frac{G(ux)}{G(x)} = \exp\left\{\int_x ^ {ux }
   \frac{\zeta(v)}{v \log v} dv \right\}  \\
  &\leq \exp\left\{C\int_x ^ {ux }\frac{1}{v \log\log v} dv \right\}
  \\
  &\leq \exp\left\{C\int_x ^ {x(\log x)^\alpha }\frac{1}{v \log\log v} dv \right\}
    \to e^{\alpha C}\,,
\end{align*}
where $C$ is a suitable constant. The claim now follows from \eqref{eq;C2.1}
since 
 $$
  \int_1 ^ {( \log x ) ^ \alpha} \frac{du}{u} =\alpha \log \log x \,.
$$

The argument for $(\rom{5})$ is similar to that for $(\rom{4})$,  We
start with 
\begin{equation} \label{eq;C2.5}
  \frac{G(x) - G\left( x 2^{- j} \right )}
 { h\circ G(x) } 
 =  \int_{2^{- j}} ^ 1 
 \frac{G(x u)}{G(x)} \cdot \frac{\zeta(x u)}{\zeta(x)} 
 \cdot 
 \frac{\log x}{\log (x u )} \cdot \frac{du}{u} . 
\end{equation}
Denoting now 
$I =\left [2^{- \rho \log x / \zeta (x) } , 1   \right ]$. Due to $\zeta (\cdot) \to \infty$, it is clear  that $   \log x  \sim \log
(xu) $ 
uniformly over $u \in I$.  Furthermore,  $ x 2^{-\rho \log x / \zeta
  (x) } \to \infty$, so  by Assumption  \ref{ass;nu}  $(B1)$, $(B2)$ and
$(B3)$, 
$$
  \frac{\zeta(xu)}{\zeta(x) } \gtrsim 
\frac{\zeta\left( x 2^{-\rho \log x / \zeta (x) } \right )}
{\zeta(x) }  \gtrsim 
\frac{\zeta\left( x 2^{-\rho \log x / (\log \log x )^\delta } \right )}
{\zeta(x) }  \gtrsim 
1\,, 
$$
uniformly over $u \in I$. Finally, for $u\in I$, by Assumption
\ref{ass;nu} $(B1)$, $(B2)$,  for some constant $C$,
\begin{align*}
\frac{G(ux)}{G(x)}&\gtrsim \frac{G\left(  x 2^{- \rho \log x / \zeta (x) } \right )}{G(x)} 
=  \exp \left ( - \int_{ x 2^{-\rho \log x / \zeta (x) }} ^ x
                    \frac{\zeta(u)}{u \log u } d u  \right ) \\
&\geqslant   \exp \left( - 
 C \zeta(x)  \int_{ x 2^{-\rho \log x / \zeta (x) }} ^ x  \frac{du}{u \log u }  
\right ) > 2 ^ {\rho C - 1}
\end{align*}
for all $x$ large, uniformly over $u \in I$. Therefore, by 
   (\ref{eq;C2.5}) and  Assumption
\ref{ass;nu} $(B2)$, 
$$
\frac{G(x) - G\left( x 2^{- j} \right )}
 {j h \circ G(x) }  \gtrsim  1\,,
$$
as required. 
\end{proof}

We finish by checking the claims made in Remark \ref{rk:key}, and we
start with the  lognormal-type tails of Example \ref{eg;lognm1}. To
see that  \eqref{e:no.second} holds, it is enough to check that for
any $C>0$,
\begin{equation} \label{e:no.sec}
  \lim_{n\to\infty}\frac{G\bigl( 1/\overline{F} (n)\bigr)}{h\circ
    G(cw_n)}=0\,. 
\end{equation}
The ratio above is asymptotic to 
\begin{equation} \label{e:no.sec1}
\frac{G\bigl( 1/\overline{F} (n)\bigr)}{ G(cw_n)} \frac {\log w_n}{ \zeta(Cw_n)}
 =  \exp \left (
 \int^{1/\bar F(n)} _ {w_n} \frac{\zeta(u)}{u \log u} du 
 \right )   \frac {\log w_n}{ \zeta(Cw_n)}\,,
\end{equation}
which converges to 0 as $n\to\infty$ by \eqref{eq;lognm1.5}. Next, for 
the super-lognormal-type tails of Example \ref{eg;suplognm1} with
$0<\alpha<1/2$ one checks that  \eqref{e:no.second} holds in the same
way as above, by using \eqref{eq;suplognm1.4} instead of
\eqref{eq;lognm1.5}.  Finally, to see that \eqref{e:no.second} fails
when $1/2<\alpha<1$, one needs to prove that, in this case, for any
$C>0$ the limit in \eqref{e:no.sec} is infinity instead of 0. To do so
one uses, once again, \eqref{e:no.sec1}. It is
routine to see that the expression there converges to infinity by
using  \eqref{eq;suplognm1.4}.


\bibliographystyle{authyear}
\bibliography{bibfile}

\end{document}